\def\mat#1{\ensuremath{#1}\xspace}
\def\cN{\mat{\mathbb{N}}}   
\def\cQ{\mat{\mathbb{Q}}}   
\def\cR{\mat{\mathbb{R}}}   
\def\cC{\mat{\mathbb{C}}}   
\def\cZ{\mat{\mathbb{Z}}}   
\def\lE{\mat{\mathcal{E}}}
\def\lH{\mat{\mathcal{H}}}
\def\lP{\mat{\mathcal{P}}}
\def\al{\mat{\alpha}}
\def\be{\mat{\beta}}
\def\ga{\mat{\gamma}}
\def\de{\mat{\delta}}
\def\De{\mat{\Delta}}
\let\etatemp\eta \def\eta{\mat{\etatemp}}
\def\hi{\mat{\chi}}
\let\xitemp\xi \def\xi{\mat{\xitemp}}
\def\La{\mat{\Lambda}}
\def\la{\mat{\lambda}}
\let\mutemp\mu\def\mu{\mat{\mutemp}}
\let\nutemp\nu\def\nu{\mat{\nutemp}}
\def\Om{\mat{\Omega}}
\def\om{\mat{\omega}}
\let\Phitemp\Phi \def\Phi{\mat{\Phitemp}}
\let\Psitemp\Psi \def\Psi{\mat{\Psitemp}}
\let\pitemp\pi\def\pi{\mat{\pitemp}}
\let\rhotemp\rho\def\rho{\mat{\rhotemp}}
\def\te{\mat{\theta}}
\def\mrm@#1{\mat{\mathrm{#1}}}
\def\DMO{\DeclareMathOperator}
\DMO{\Hom}{Hom}
\DMO{\lHom}{\lH\mathit{om}}
\DMO{\Ext}{Ext}
\DMO{\lExt}{\lE\mathit{xt}}
\DMO{\End}{End}
\DMO{\Aut}{Aut}
\DMO{\Fun}{Fun}
\DMO{\Tor}{Tor}
\DMO{\ext}{ext}
\DMO{\Ob}{Ob}
\DMO{\Mor}{Mor}
\DMO{\im}{im}
\DMO{\coim}{coim}
\DMO{\coker}{coker}
\DMO{\Arr}{Arr}
\DMO{\Id}{Id}
\DMO{\id}{id}
\DMO{\add}{add} 
\DMO{\ind}{ind} 
\DMO{\pro}{pro} 
\DMO{\Map}{Map} %
\DMO{\Iso}{Iso} %
\DMO{\Isom}{Isom}%
\DMO{\Ind}{Ind}
\DMO{\Presh}{Presh}
\DMO\coalg{Coalg}
\DMO{\Rep}{Rep}
\DMO{\Cor}{Cor}
\DMO{\Mod}{Mod}
\DMO{\rad}{rad}
\DMO{\soc}{soc}
\DMO{\ann}{ann}
\DMO{\Spec}{Spec}
\DMO{\spec}{Spec}
\DMO{\Proj}{Proj}
\DMO{\supp}{supp}
\DMO{\Coh}{Coh}
\DMO{\coh}{Coh}
\DMO{\Qcoh}{QCoh}
\DMO{\QCoh}{QCoh}
\DMO{\Pic}{Pic}
\DMO{\Div}{Div}
\DMO{\ch}{ch}
\DMO{\Hilb}{Hilb}
\DMO{\Fitt}{Fitt}
\DMO{\Quot}{Quot}
\DMO{\Gras}{Gr}
\DMO{\Flag}{Flag}
\DMO{\cone}{cone}
\DMO{\Tw}{Tw}
\DMO{\rank}{rk}
\DMO{\rk}{rk}
\DMO{\codim}{codim}
\DMO{\cov}{cov}
\DMO{\sgn}{sgn}
\DMO{\td}{td}
\DMO{\GL}{GL}
\DMO{\SL}{SL}
\DMO\Der{Der}
\DMO\der{Der}
\DMO\coder{Coder}
\DMO{\diag}{diag}
\DMO{\HMod}{HMod} 
\DMO{\ad}{ad}
\DMO{\Ad}{Ad}
\DMO*{\colim}{colim}
\DMO*{\hocolim}{hocolim}
\DMO*{\holim}{holim}
\DMO{\Ho}{Ho}
\DMO{\har}{char}
\DMO{\sk}{sk}
\DMO{\cosk}{cosk}
\DMO{\Gal}{Gal}
\DMO{\tr}{tr}
\DMO{\Tr}{Tr}
\DMO{\Sh}{Sh}
\DMO{\Is}{Is} 
\DMO{\Hol}{Hol} 
\DMO{\Lie}{Lie} 
\DMO{\Res}{Res} 
\DMO{\irr}{irr} %
\DMO{\Irr}{Irr} %
\DMO{\Exp}{Exp} %
\DMO{\Log}{Log} %
\DMO{\Pow}{Pow}
\DMO{\pow}{pow}
\DMO{\mult}{mult} %
\DMO{\height}{ht} %
\DMO{\wt}{wt}
\DMO{\Vect}{Vect}
\DMO{\moda}{mod}
\DMO{\hd}{hd} 
\def\dd{\mat{\partial}}
\def\iso{\simeq}
\def\ts{\otimes}
\def\tl#1{\mat{\tilde{#1}}}
\def\wtl#1{\mat{\widetilde{#1}}}
\def\what#1{\mat{\widehat{#1}}}
\def\sb{\subset}
\def\sp{\supset}
\def\imp{\mat{\Rightarrow}}
\def\xx{\times}
\def\ms{\backslash} 
\def\pser#1{[\![#1]\!]} 
\DMO{\Eig}{Eig} 
\def\inv{^{-1}}
\def\dual{^\vee}
\def\sst{^{ss}}
\def\ang#1{\mat{\left\langle #1\right\rangle}}
\def\angs#1#2{\mat{\left\langle #1 \mid #2\right\rangle}}
\def\set#1{\mat{\{ #1\}}}
\def\sets#1#2{\mat{\{ #1 \mid #2\}}}
\def\emb{\hookrightarrow}
\def\mto{\mapsto}
\def\arr{\futurelet\test\arrtest}
\def\arrtest{\ifx^\test\let\next\arra\else\let\next\arrb\fi\next}
\def\arra^#1{\xrightarrow{#1}} \def\arrb{\to}
\def\arrowsD{
\def\mto{{\:\vrule height .9ex depth -.2ex width .04em\!\!\!\;\ar}}
\def\ar{{\:\vrule depth -.52ex height .60ex width 0.85em\;\!\!\rhla\,}}
\def\arr{\futurelet\test\arrtest}
\def\arrtest{\ifx^\test\let\next\arra\else\let\next\arrb\fi\next}
\def\arra^##1{\rTo^{##1}} \def\arrb{\ar}
\def\emb{\futurelet\test\embtest}
\def\embtest{\ifx^\test\let\next\emba\else\let\next\embb\fi\next}
\def\emba^##1{\rInto^{##1}} \def\embb{{\:\rthooka\!\!\!\ar}}
\newarrow{Eq}=====
\def\rrarr{\pile{\rTo\\ \rTo}}
\def\lrarr{\pile{\rTo\\ \lTo}}   
\newarrow{ShortTo}{}{}-->
}
\def\arrowsDStandard{
\newarrow{TeXto}----{->}
\newarrow{TeXinto}C---{->}
\newarrow{TeXonto}----{->>}
\newarrow{TeXdashto}{}{dash}{}{dash}{->}
\newarrow{Eq}=====
\def\ar{\rightarrow}
\def\emb{\futurelet\test\embtest}
\def\embtest{\ifx^\test\let\next\emba\else\let\next\embb\fi\next}
\def\emba^##1{\rTeXinto^{##1}} \def\embb{\hookrightarrow}
}
\newif\ifukr\ukrfalse
\newif\ifrus\rusfalse
\newif\ifger\gerfalse
\def\theorems{
\newcounter{nthr} 
\numberwithin{nthr}{section}
\newtheorem{thr}[nthr]{Theorem}
\newtheorem{prp}[nthr]{Proposition}
\newtheorem{lmm}[nthr]{Lemma}
\newtheorem{crl}[nthr]{Corollary}
\newtheorem{clm}[nthr]{Claim}
\newtheorem{conj}[nthr]{Conjecture}
\theoremstyle{definition}
\newtheorem{dfn}[nthr]{Definition}
\newtheorem{rmr}[nthr]{Remark}
\newtheorem{exm}[nthr]{Example}
\newtheorem{claim}[nthr]{Claim}}
\def\ub#1{\mat{\overline{#1}}}  
\newtheorem{condition}[nthr]{Condition}
\def\q{\mat{Q}}	
\def\pq{\mat{\wtl Q}} 
\def\bg{\mat{G}} 
\def\pc{\mat{\lP\q}} 
\def\pg{\mat{\lP_{\rm w}\q}} 
\def\fc{\mat{\pc/(\dd W)}} 
\def\fg{\mat{\pg/(\dd W)}} 
\def\hat{\what}
\begin{document}
\title[Noncommutative Donaldson-Thomas invariants]
{On the noncommutative Donaldson-Thomas invariants arising from brane tilings}%

\author{Sergey Mozgovoy}%
\author{Markus Reineke}%


\email{mozgov@math.uni-wuppertal.de}%
\email{reineke@math.uni-wuppertal.de}%


\begin{abstract}
Given a brane tiling, that is a bipartite graph on a torus,
we can associate with it a quiver potential and a quiver
potential algebra.
Under certain consistency conditions on a brane tiling, 
we prove a formula for the Donaldson-Thomas type invariants
of the moduli space of framed cyclic modules over the
corresponding quiver potential algebra. We relate this
formula with the counting of perfect matchings of the periodic
plane tiling corresponding to the brane tiling.
We prove that the same consistency conditions
imply that the quiver potential algebra is a 3-Calabi-Yau
algebra. We also formulate a rationality conjecture for the
generating functions of the Donaldson-Thomas type invariants.
\end{abstract}
\maketitle
\tableofcontents
\section{Introduction}
The main objective of this paper is to generalize the results
of Szendr\H oi \cite{Szen1} on the noncommutative Donaldson-Thomas theory
in the case of the conifold to the case of quiver potentials arising from
arbitrary brane tilings (see Section \ref{sec:brane}).
That is, we compute the Donaldson-Thomas type invariants 
\cite{BehrFant1} of the moduli spaces of framed cyclic modules
over the quiver potential algebra. 

The Donaldson-Thomas type invariants
are the weighted Euler numbers and they can therefore be computed using
the localization technique, whenever we can find an appropriate torus action
on the moduli space. In this paper we show that the 
quiver potential algebra arising from the brane tiling has a canonical grading.
This grading provides us with a torus action on the moduli
space, so that there is a finite number of fixed points, and we
get a purely combinatorial formula that counts ideals in some
poset of paths in the quiver (see Proposition \ref{prp:formula1}).

This nice state of affairs will not be for free. We need
to impose certain conditions on the brane tiling 
(see conditions \ref{cond:B}, \ref{cond:C}),
which we call the consistency conditions.
These conditions should be compared with the consistency
conditions arising in physics \cite{Hana1}.
It follows from the results of \cite{HHV, Broomhead1}
that our consistency conditions are satisfied under the physical 
consistency conditions (see also Remark \ref{rmr:R-charge}).

In the presence of the consistency conditions, we 
construct a bijection between the set of ideals in the poset of paths 
mentioned above and the set of perfect matchings
(having certain prescribed behavior at infinity)
of the periodic plane tiling induced by the brane tiling.
This bijection generalizes the folklore result on the correspondence
between the $3$-dimensional Young diagrams and the so-called honeycomb
dimers (see Section \ref{sec:perf mat}).
Earlier Szendr\H oi \cite{Szen1}
observed such correspondence in the conifold case, where it also
has a visual interpretation like for the $3$-dimensional Young diagrams.

Furthermore, we prove that, under the consistency conditions,
the quiver potential algebra is always a $3$-Calabi-Yau algebra
(and actually a graded $3$-Calabi-Yau algebra, when provided with a
canonical grading).
This does not contradict the results of Bocklandt \cite[Theorem 3.1]{Bock1},
who proved that a graded quiver potential algebra can be a graded $3$-Calabi-Yau
algebra only under some rather restrictive conditions (for example,
the algebra $\cC[x,y,z]$ does not satisfy those conditions) because
the grading group in \cite{Bock1} can only be \cZ and all arrows have degree one there. 
We learned that Nathan Broomhead \cite{Broomhead1} recently proved the $3$-Calabi-Yau 
property under the condition that there exists an $R$-charge on
the brane tiling (see Remark \ref{rmr:R-charge}).

Our formulas give a way to compute the non-commutative DT
invariants with a computer. In the cases of the orbifolds
$\cC^3/\cZ_n$, $\cC^3/(\cZ_2\xx\cZ_2)$ and in the case of the conifold
there exist nice compact formulas due to Benjamin Young \cite{Young1}.
Using the results of Young and computer evidence in other cases,
we formulate a rationality conjecture on the non-commutative DT
invariants in Section \ref{sec:conj}.
We learned from Nagao Kentaro about joint work with Hiraku
Nakajima \cite{Nagao1} \cite{Nagao2} on the proof of Young's formulas in the conifold case
using wall-crossing formulas. 
Their technique could possibly provide
a formula for general brane tilings.

The paper is organized as follows: 
In Section \ref{sec:hilbert_schemes} we construct the moduli
spaces of framed cyclic modules and describe the general localization
technique to reduce the problem of computation of the Euler number
of the moduli space to some combinatorial problem.
In Section \ref{sec:brane} we consider the quiver potentials
associated with brane tilings and study the canonical grading
of the quiver potential algebra. In Section \ref{sec:groupoid}
we study different equivalence relations on paths
induced by a potential. In Section \ref{sec:perf mat} we prove
a bijection between the set of finite ideals in the poset of paths
and the set of perfect matchings of the periodic plane tiling having
some prescribed behavior at infinity. In Section \ref{sec:CY} we prove that
the quiver potential algebra associated to a brane tiling satisfying
the consistency conditions is a $3$-Calabi-Yau algebra. In Section
\ref{sec:dt invariants}
we relate the Donaldson-Thomas type invariants of moduli spaces
of framed cyclic modules with their Euler numbers.
In Section \ref{sec:conj} we formulate the rationality conjecture.

We would like to thank Raf Bocklandt and Alastair King for many
helpful discussions and useful comments. We would like to thank
Tom Bridgeland for pointing out a mistake in Section \ref{sec:conj}
in the first version of the paper.
\section{Hilbert schemes}\label{sec:hilbert_schemes}
Let \q be a quiver, let $I\sb\cC\q$ be an ideal
of the path algebra, and let $A=\cC\q/I$ be the factor algebra.
For any $i\in\q_0$ there is an idempotent $e_i\in A$.
The $A$-module $P_i:=Ae_i$ is a projective $A$-module.
For any $A$-module $M$, we define a vector space $M_i:=e_iM$.
Then $M=\oplus_{i\in\q_0}M_i$. 
We define $\dim M:=(\dim M_i)_{i\in\q_0}\in\cN^{\q_0}$.

\begin{dfn}
For any $i\in\cQ_0$, we define an $i$-cyclic $A$-module
to be a pair $(M,m)$ where $M$ is a finite-dimensional
left $A$-module, $m\in M_i$, and $Am=M$. 
The $i$-cyclic $A$-modules form a category, where morphisms
$f:(M,m)\arr(N,n)$ are $A$-module homomorphisms
$f:M\arr N$ such that $f(m)=n$.
\end{dfn}

It is clear that the $i$-cyclic $A$-modules correspond
to the finite-dimensional quotients of $P_i$. 
We want to construct the moduli
spaces of isomorphism classes of $i$-cyclic $A$-modules, 
which we will call the Hilbert schemes.

Let $\hat\q$ be a new quiver with 
$$\what\q_0=\q_0\cup\set*,\qquad \hat\q_1=\q_1\cup\set{a_*:*\arr i}.$$
Let $\hat I\sb\cC\hat\q$
be the ideal generated by $I$. Then a module over 
$\hat A=\cC\hat\q/\hat I$ can be identified with a triple $(M,V,f)$,
where $M$ is an $A$-module, $V$ is a vector space, and 
$f:V\arr M_{i}$ is a linear map.

\begin{dfn}
Given an element $\te\in\cN^{\q_0}$, called a stability,
we define a slope function
$$\mu:\cN^{Q_0}\ms\set0\arr\cQ,\qquad \al\mto\frac{\te\cdot\al}{\sum_{i\in\q_0}\al_i}.$$
For any finite-dimensional nonzero $A$-module $M$, we define
$\mu(M):=\mu(\dim M)$. The module $M$ is called
\te-semistable (resp. \te-stable) if for any its proper nonzero
submodule $N\sb M$ we have $\mu(N)\le\mu(M)$
(resp. $\mu(N)<\mu(M)$). The stability condition
for $\hat A$-modules is defined in the same way.
\end{dfn}

Given a dimension vector $\al\in\cN^{\q_0}$, we define a new dimension vector
$\hat\al=(\al,1)\in\cN^{\q_0}\xx\cN=\cN^{\hat\q_0}$.
Define a stability $\hat\te=(0,\dots,0,1)\in\cN^{\q_0}\xx\cN=\cN^{\hat\q_0}$.

\begin{lmm}
Let $(M,V,f)$ be an $\hat A$-module  of dimension $\hat\al$.
Then the following conditions are equivalent
\begin{enumerate}
	\item $(M,V,f)$ is $\hat\te$-semi-stable.
	\item $(M,V,f)$ is $\hat\te$-stable.
	\item  $f(V)$ generates $M$ as an $A$-module.
\end{enumerate}
\end{lmm}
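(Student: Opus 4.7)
The plan is a routine unpacking of the stability definition, exploiting the fact that the framing dimension is exactly one. Since $\dim V = 1$, any $\hat A$-submodule of $(M,V,f)$ has the form $(N,W,g)$ with $N\sbe M$ an $A$-submodule, $W\sbe V$ with $\dim W\in\{0,1\}$, and (if $W=V$) $f(V)\sbe N$. The slope of the whole module is $\mu(M,V,f)=\frac{1}{\sum_i\al_i+1}$, while the slope of a submodule is $0$ if $W=0$ and $\frac{1}{\dim N+1}$ if $W=V$.

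First I would note the trivial implication $(2)\Rightarrow(1)$. Next, for $(1)\Rightarrow(3)$, I would argue by contrapositive: if $f(V)$ does not generate $M$, set $N:=Af(V)\subsetneq M$. Then $(N,V,f)$ is a proper nonzero $\hat A$-submodule of dimension $(\dim N,1)$, whose slope
\[
\mu(N,V,f)=\frac{1}{\dim N+1}>\frac{1}{\dim M+1}=\mu(M,V,f),
\]
contradicts $\hat\te$-semistability.

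Finally, for $(3)\Rightarrow(2)$, let $(N,W,g)$ be any proper nonzero $\hat A$-submodule of $(M,V,f)$. If $\dim W=0$, then its slope is $0<\mu(M,V,f)$, so the strict inequality holds. If $\dim W=1$, then $W=V$ and $f(V)\sbe N$; since $f(V)$ generates $M$, this forces $N=M$, so $(N,W,g)=(M,V,f)$ is not proper, a contradiction. Hence every proper nonzero submodule strictly decreases the slope, i.e.\ $(M,V,f)$ is $\hat\te$-stable.

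There is no real obstacle here; the only mild subtlety is to be careful that a submodule with $W=V$ is required to contain $f(V)$, which is what ties semistability to the generation condition. Once that is noted, the three-way equivalence follows directly from the arithmetic of the two possible slopes $0$ and $\frac{1}{\dim N+1}$ compared against $\frac{1}{\dim M+1}$.
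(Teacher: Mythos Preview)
Your argument is correct and is exactly the standard one; the paper in fact states this lemma without proof, treating it as routine. There is nothing to add or compare.
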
 

This lemma implies that the moduli space
$$\Hilb^\al_{i}(A):=M\sst_\te(\hat A,\hat\al)$$
of semistable $\hat A$-modules \cite{King1}
parametrizes the $i$-cyclic modules, or equivalently,
the quotients of $P_{i}$ of dimension~$\al$.

The goal of this paper is to study the Donaldson-Thomas type
invariant of $\Hilb^\al_{i}(A)$. It is defined as a certain 
weighted Euler number (see e.g. \cite{BehrFant1}). 
We will first develop
certain techniques to compute the usual Euler number of
this Hilbert scheme and then prove in Section \ref{sec:dt invariants}
that its DT-invariant differs from the Euler number
just by sign (in the case of quiver potentials induced by brane tilings). 
This should be compared with the dimension
zero MNOP conjecture \cite[Theorem 4.12]{BehrFant1}, \cite{MNOP1}.
We define a partition function
$$Z^i(A)=\sum_{\al\in\cN^{\q_0}}\hi_c(\Hilb^\al_{i}(A))x^\al\in\cQ\pser{x_j|j\in\q_0},$$
where $\hi_c$ means the Euler number of cohomology
with compact support.

Let $\wt:\q_1\arr\La$ be a map, called a weight function, to a free abelian
finitely generated group \La. The path algebra $\cC\q$ is then 
automatically a \La-graded algebra. 
We assume that $I\sb\cC\q$ is a \La-homogeneous ideal.
The quotient algebra $A=\cC\q/I$ is again a
\La-graded algebra.

We define the action of the torus $T=\Hom_\cZ(\La,\cC^*)$
on the Hilbert scheme $\Hilb^\al_{i}(A)$
of $i$-cyclic $A$-modules as follows. 
For any $t\in T$ and any $i$-cyclic $A$-module $(M,m)$, 
we define $(M',m')=t(M,m)$ 
by $M'_i=M_i$ for $i\in\q_0$, $m'=m$ and
$$M'_a=t_aM_a,\qquad\text{for }a\in\q_1,$$
where $t_a:=t(\wt(a))$.
By localization, we have
$$\hi_c(\Hilb^\al_{i}(A))=\hi_c(\Hilb^\al_{i}(A)^T),$$
where $\Hilb^\al_{i}(A)^T$ is the subvariety of $T$-fixed points.

\begin{thr}
An $i$-cyclic $A$-module $(M,m)$ in $\Hilb^\al_{i}(A)$
is $T$-fixed if and only if $M$ possesses a $\La$-grading
as an $A$-module such that $m$ has degree zero 
(such a grading is unique as $m$ generates $M$).
\end{thr}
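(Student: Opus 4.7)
The plan is to establish both implications separately; the reverse direction carries the substance and proceeds by extracting a $\La$-grading from the torus-equivariance data. The key technical input is that if $(M,m)$ is $T$-fixed, then for each $t\in T$ there is a unique isomorphism $\phi_t\colon(M,m)\xarr{\sim} t(M,m)$ of $i$-cyclic $A$-modules: existence is the definition of $T$-fixedness, and uniqueness follows because any $A$-module endomorphism of $M$ fixing $m$ is the identity, as $m$ generates $M$. From uniqueness one also gets $\phi_{ts}=\phi_t\phi_s$, so $t\mapsto\phi_t$ is a group homomorphism $T\to\GL(M)$.

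For the direction grading $\Rightarrow$ fixedness, suppose $M=\bigoplus_{\la\in\La}M^\la$ is an $A$-module grading with $m\in M^0$. Define $\phi_t\in\End(M)$ by $\phi_t(x):=t(\la)\cdot x$ for $x\in M^\la$. The conditions $a\cdot M^\la\sbe M^{\la+\wt(a)}$ for $a\in\q_1$ and $m\in M^0$ yield at once $\phi_t(a\cdot x)=t(\la+\wt(a))\cdot ax=t_a\cdot a\cdot\phi_t(x)$ and $\phi_t(m)=m$, so $\phi_t$ realizes the required isomorphism $(M,m)\xarr{\sim} t(M,m)$.

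For the reverse, with $\phi_t$ the unique isomorphism as above, I would first verify by induction on the length of a path $p$ starting at $i$ that
\[ \phi_t(p\cdot m)=t(\wt(p))\cdot p\cdot m, \]
using $\phi_t(m)=m$ together with the intertwining relation $\phi_t(a\cdot y)=t_a\cdot a\cdot\phi_t(y)$. Then define weight spaces $M^\la:=\sets{x\in M}{\phi_t(x)=t(\la)\,x\text{ for all }t\in T}$. The computation shows $p\cdot m\in M^{\wt(p)}$, and since $m$ generates $M$ as an $A$-module this gives $\sum_\la M^\la=M$. Directness of the sum follows from linear independence of the distinct characters $t\mapsto t(\la)$ of the torus $T$, a Vandermonde-style argument (applicable since $\La$ is free abelian, so distinct elements give distinct characters). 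A one-line check on weight vectors, $\phi_t(a\cdot x)=t_a\cdot a\cdot\phi_t(x)=t(\la+\wt(a))\cdot ax$ for $x\in M^\la$, shows $a\cdot M^\la\sbe M^{\la+\wt(a)}$, so the decomposition is an $A$-module grading, with $m\in M^0$ because $\phi_t(m)=m=t(0)\cdot m$.

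Uniqueness of the grading then falls out at once: in any valid grading the element $p\cdot m$ must lie in degree $\wt(p)$, and such elements span $M$, so each graded piece is determined. The main obstacle is the directness of the weight-space decomposition via linear independence of torus characters; everything else reduces to routine manipulations with the intertwining relation $\phi_t(a\cdot x)=t_a\cdot a\cdot\phi_t(x)$.
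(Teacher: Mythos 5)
Your argument is correct and follows essentially the same route as the paper: uniqueness of the intertwiner $\phi_t$ (because $m$ generates $M$) converts $T$-fixedness into a representation of $T$ on $M$, which is then decomposed into $\La$-weight spaces compatible with the arrows, exactly as in the paper's construction of $\psi=p_2\circ p_1^{-1}$ from the subgroup $H\subset T\times\GL_\al$. The only difference is cosmetic: the paper quotes the standard weight-space decomposition for the torus action $\psi_i$ on each $M_i$, while you build the decomposition by hand, showing the vectors $p\cdot m$ are weight vectors that span $M$ and using linear independence of characters (valid since $\La$ is free) for directness, which has the small advantage of sidestepping any discussion of algebraicity of $t\mapsto\phi_t$.
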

\begin{proof}
Let $(M,m)$ be some $T$-fixed point of $\Hilb^\al_{i_0}(A)$, $i_0\in\q_0$. 
For any $t\in T$ there exists $g=(g_i)_{i\in\q_0}\in\GL_\al$
such that for any arrow $a:i\arr j$, we have (recall that $t_a=t(\wt(a))$)
$$t_aM_a=g_jM_ag_i\inv$$
and $g_{i_0}m=m$. 
Consider the subgroup $H\sb T\xx\GL_\al$ of all pairs
$(t,g)$ that satisfy this condition. Then $p_1:H\arr T$ is 
surjective. But its kernel is trivial. It consists of
pairs $(1,g)$, where $g$ is an automorphism of $M$
that fixes $m$. It follows that $g$ acts trivially on $M$,
as $m$ generates $M$.
Consider the composition 
$$\psi=p_2\circ p_1\inv:T\arr\GL_\al$$
and split it to components $\psi_i:T\arr\GL(M_i)$, $i\in\q_0$.
Then for any arrow $a:i\arr j$, we have 
$$t_aM_a=\psi_j(t_a)M_a\psi_i(t_a)\inv$$
and $\psi_{i_0}(t)m=m$ for any $t\in T$.
Using the action of $T$ on $M_i$ defined by $\psi_i$, we can decompose $M_i$
with respect to the character group $X(T)\iso\La$ of $T$
$$M_i=\bigoplus_{\la\in\La}M_{i,\la}.$$
Then the above condition implies that for any arrow $a:i\arr j$
$$M_a(M_{i,\la})\sb M_{j,\la+\wt(a)}.$$
and $m\in M_{i_0,0}$. This means that $M$ is a \La-graded
$A$-module and $m$ has degree zero.
The converse statement is easy.
\end{proof}

An $i$-cyclic $A$-module $(M,m)$ as in the above theorem, 
will be called a \La-graded $i$-cyclic $A$-module.
Given a path $u$ in \q, let us define its weight $\wt(u)\in\La$
to be the sum of the weights of arrows from $u$. We assume that
\begin{enumerate}
	\item Any two paths in \q having the same startpoint
	and the same weight are proportional in $A$.
	\item The weight of any non-trivial path is non-zero.
\end{enumerate}

The first assumption implies that the \La-graded module $P_{i}$ 
has dimension at most one at every degree. 
It follows that any \La-graded quotient of $P_i$ is determined
by the set of weights from its support.
We define $\De=\De_i$ to be the set
of paths starting at $i$ modulo an equivalence relation
$u\sim v$ if $\wt(u)=\wt(v)$. 
It follows from our assumptions that there is a poset structure 
on \De, given by the rule $u\le v$ 
if there exists some path $w$ with $wu\sim v$.
There is a bijection between \De and the set of weights $\la\in\La$
such that $P_{i,\la}\ne0$.

\begin{lmm}\label{lmm:La-graded mod. and ideals}
There is a bijection between the set of isomorphism classes
of \La-graded $i$-cyclic $A$-modules and the set of finite
ideals of $\De_i$ (these are finite subsets $\Om\sb\De$ such that $x\le y,y\in\Om$
implies $x\in \Om$) given by the rule
$$(M,m)\mto \sets{u\in\De_i}{M_{\wt(u)}\ne0}.$$
\end{lmm}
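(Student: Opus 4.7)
The plan is to identify a $\La$-graded $i$-cyclic module $(M,m)$ with a $\La$-graded quotient of the projective $P_i$, and then to read off such quotients combinatorially from their weight supports. Since $e_i \in P_{i,0}$ and $m \in M_{i,0}$ generates $M$, the assignment $e_i \mto m$ extends to a $\La$-homogeneous surjection $\pi \colon P_i \to M$ of $A$-modules. Thus $(M,m)$ is determined by its $\La$-homogeneous kernel $N = \ker \pi$. By the first assumption on weights, $\dim P_{i,\la} \le 1$ for every $\la$, so any $\La$-homogeneous subspace $N \sb P_i$ is uniquely determined by the set $S_N = \sets{\la \in \De_i}{N_\la \ne 0} \sb \De_i$.

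The key step is to show that $N$ is an $A$-submodule of $P_i$ if and only if $S_N$ is upward closed, equivalently its complement $\Om = \De_i \ms S_N$ is downward closed, i.e. an ideal. For the forward direction, suppose $u \in S_N$ and $u \le v$ via a path $w$ with $wu \sim v$. A nonzero generator $x_u$ of $P_{i,\wt(u)}$ lies in $N$, and proportionality forces $w \cdot x_u$ to be a nonzero scalar multiple of the generator $x_v$ of $P_{i,\wt(v)}$; since $N$ is a submodule, $x_v \in N$, so $v \in S_N$. For the converse, let $a \in A$ be homogeneous of weight $\eta$, write $a = \sum c_k w_k$ with each $w_k$ a path of weight $\eta$, and note that only those $w_k$ starting at the endpoint of $u$ contribute a nonzero product with $x_u$; the first weight assumption says all such $w_k$ are mutually proportional in $A$, so $a \cdot x_u$ is either zero or a nonzero scalar multiple of some single $w_k \cdot x_u$. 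In the latter case the path $w_k u$ is nonzero in $A$, its class $v \in \De_i$ satisfies $u \le v$, and $v \in S_N$ by upward closure, yielding $a \cdot x_u \in N$.

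Finally, the quotient $M = P_i/N$ has weight support precisely $\Om = \De_i \ms S_N$ and dimension $|\Om|$, so finite-dimensional graded quotients correspond bijectively to finite ideals $\Om$ of $\De_i$, via the map $(M,m) \mto \sets{u \in \De_i}{M_{\wt(u)} \ne 0}$. The main obstacle is the submodule-versus-ideal translation in the middle paragraph, which crucially uses both weight hypotheses: the first to force $\dim P_{i,\la} \le 1$ and to identify $w_k u$ with $x_v$ up to scalar, and the second to ensure that $M_{i,0}$ is spanned by $m$ alone, so that the passage $(M,m) \leftrightarrow$ graded quotient of $P_i$ is canonical.
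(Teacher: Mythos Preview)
The paper states this lemma without proof, treating it as immediate from the preceding observation that each graded piece $P_{i,\la}$ is at most one-dimensional and hence a $\La$-graded quotient of $P_i$ is determined by the set of weights in its support. Your argument spells out precisely this correspondence---identifying graded $i$-cyclic modules with graded quotients of $P_i$, and translating the submodule condition on the kernel into upward-closedness of its weight support---and is exactly the proof the paper leaves implicit.
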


In the next sections we will construct the weight functions
for the quivers induced by brane tilings and investigate
when the above assumptions are satisfied.

\section{Brane tilings and quiver potentials}\label{sec:brane}

\begin{dfn}
A bipartite graph $\bg=(\bg_0^+,\bg_0^-,\bg_1)$
consists of two sets of vertices $\bg_0^+,\bg_0^-$,
called the sets of white and black vertices respectively,
a set of edges $\bg_1$ and a map $\bg_1\arr\bg_0^+\xx\bg_0^-$.
The corresponding CW-complex is also denoted by $G$.
We denote the set of all vertices of \bg 
by $\bg_0=\bg_0^+\cup\bg_0 ^-$.
\end{dfn}

\begin{dfn}
A brane tiling is a bipartite graph $G$
together with an embedding of the corresponding CW-complex 
into the real two-dimensional torus $T$ so that the complement
$T\ms\bg$ consists of simply-connected components.
We identify any two homotopy equivalent embeddings.
The set of connected components of $T\ms\bg$ is denoted by 
$\bg_2$ and is called the set of faces of $G$.
\end{dfn}

We will always assume that the connected components of $T\ms\bg$
are convex polygons.
We define a quiver $\q=(\q_0,\q_1)$ dual to the brane tiling
\bg as follows. The set of vertices $\q_0$ is $\bg_2$, the set
of arrows $\q_1$ is $\bg_1$. For any arrow $a\in\q_1$ we define
its endpoints to be the polygons in $\bg_2$
adjacent to $a$.
The direction of $a$ is chosen in such a way that
the white vertex is on the right of $a$.
The CW-complex corresponding to $Q$ is automatically
embedded in $T$. 
The set of connected components of the complement,
called the set of faces of \q,
will be denoted by $\q_2$. It can be identified
with $\bg_0$. There is a decomposition $\q_2=\q_2^+\cup\q_2^-$
corresponding to the decomposition $\bg_0=\bg_0^+\cup\bg_0 ^-$.
It follows from our definition that the arrows
of the face from $\q_2^+$ go clockwise and the arrows 
of the face from $\q_2^-$ go anti-clockwise.

For any face $F\in\q_2$, we will denote by $w_F$
the necklace (equivalence class of cycles in \q
up to shift) obtained by going along the arrows of $F$.
We define the potential of $Q$ (see e.g. \cite{Ginz1,Bock1}
for the relevant definitions) by
$$W=\sum_{F\in\q_2^+}w_F-\sum_{F\in\q_2^-}w_F.$$
We want  to apply the results of Section \ref{sec:hilbert_schemes} 
to the algebra $\cC Q/(\dd W)$, which we call a quiver potential
algebra.

Consider the complex
$$\cZ^{\q_2}\arr^{d_2}\cZ^{\q_1}\arr^{d_1}\cZ^{\q_0},$$
where $d_2(F)=\sum_{a\in F}a$ for $F\in\q_2$ and
$d_1(a)=s(a)-t(a)$ for $a\in\q_1$. Its homology
groups are isomorphic to $H_*(T,\cZ)$.
We define the group \La as
$$\La=\cZ^{\q_1}/\angs{d_2(F)-d_2(F')}{F,F'\in\q_2}$$
and define the weight function $\wt:\cZ^{\q_1}\arr\La$
to be the projection. 
It is clear that the ideal 
$(\dd W)\sb\cC Q$ is automatically \La-homogeneous.
For any path $u$ in \q, we define its content $|u|\in\cZ^{\q_1}$
by counting the multiplicities of arrows in $u$. We define
the weight of the path $u$ by $\wt(u):=\wt(|u|)$.
Let $\ub\om:=\wt(d_2(F))$ for some (any) $F\in\q_2$.
The weight of an arrow $a\in\q_1$ will also usually be denoted by $a$.

\begin{lmm}\label{lmm:free group}
Assume that one of the following conditions is satisfied
\begin{enumerate}
	\item There exists at least one perfect matching of $G$.
	\item All faces of $\q$ contain the same number of arrows.
\end{enumerate}
Then the group \La is free.
\end{lmm}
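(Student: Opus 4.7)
The plan is to realize $\La$ as an extension of two free abelian groups, exploiting the complex in the statement whose homology is $H_*(T,\cZ)$. Set $N:=\angs{d_2(F)-d_2(F')}{F,F'\in\q_2}$ and write $\sigma\colon\cZ^{\q_2}\to\cZ$ for the augmentation $\sigma(F)=1$; then $N=d_2(\ker\sigma)$. Fixing any $F_0\in\q_2$, the splitting $\cZ^{\q_2}=\ker\sigma\oplus\cZ\cdot F_0$ gives $\im(d_2)=N+\cZ\cdot d_2(F_0)$, yielding a short exact sequence
$$0\to\im(d_2)/N\to\La\to\cZ^{\q_1}/\im(d_2)\to 0.$$
Since any extension of a free abelian group by a free abelian group is itself free, it suffices to prove that both outer terms are free.

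For the right-hand term I would use the evident short exact sequence
$$0\to\ker(d_1)/\im(d_2)\to\cZ^{\q_1}/\im(d_2)\to\im(d_1)\to 0,$$
whose kernel is $H_1(T,\cZ)\cong\cZ^2$ and whose quotient is a subgroup of $\cZ^{\q_0}$; both are free, so $\cZ^{\q_1}/\im(d_2)$ is free.

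The heart of the argument is the left-hand term. By definition of $N$, for $n\in\cZ$ we have $n\cdot d_2(F_0)\in N$ if and only if there exists $w\in\ker(d_2)\cong H_2(T,\cZ)\cong\cZ$ with $\sigma(w)=n$. To exploit this I would identify an explicit element of $\ker(d_2)$: since each arrow of $\q$ bounds exactly one face in $\q_2^+$ and one face in $\q_2^-$ (the two endpoints of the underlying edge of $G$), the element $\kappa:=\sum_{F\in\q_2^+}F-\sum_{F\in\q_2^-}F$ satisfies $d_2(\kappa)=\sum_a a-\sum_a a=0$, and is manifestly nonzero. Because $\ker(d_2)$ has rank one, if $\sigma(\kappa)=|\q_2^+|-|\q_2^-|=0$ then $\sigma$ vanishes identically on $\ker(d_2)$, forcing $n=0$ in the equivalence above. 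Consequently $\im(d_2)/N\cong\cZ$ as soon as $|\q_2^+|=|\q_2^-|$.

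Finally I would check that the equality $|\q_2^+|=|\q_2^-|$ follows from each hypothesis. Under (1), a perfect matching pairs white and black vertices of $G$ via edges, so $|\bg_0^+|=|\bg_0^-|$, i.e.\ $|\q_2^+|=|\q_2^-|$. Under (2), if every face contains exactly $k$ arrows, then counting arrow-face incidences (each arrow sits on exactly one positive and one negative face) gives $k|\q_2^+|=|\q_1|=k|\q_2^-|$. The main obstacle in the plan is the explicit identification of a nonzero element of $\ker(d_2)$ via the geometric fact that each arrow bounds exactly one positive and one negative face; once this is in place the remaining steps are routine homological algebra.
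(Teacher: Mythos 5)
Your proof is correct, and it starts from the same decomposition as the paper's: your sequence $0\to\im(d_2)/N\to\La\to\cZ^{\q_1}/\im(d_2)\to0$, with $N=\angs{d_2(F)-d_2(F')}{F,F'\in\q_2}$, is the paper's sequence $\cZ\to\La\to\coker d_2\to0$, the image of $1\mapsto\ub\om$ being exactly $\im(d_2)/N$. The two substantive steps, however, are carried out by genuinely different means. For freeness of $\coker d_2$ the paper argues geometrically: given $k\la\in\im d_2$ it realizes $\la$ as the content of a weak cycle, lifts it to the periodic quiver \pq in the universal cover, uses $k\la\in\im d_2$ to see that the lift closes up, and then writes $\la$ as a signed sum of boundaries of the enclosed faces; you instead argue homologically, sandwiching $\cZ^{\q_1}/\im(d_2)$ between $H_1(T,\cZ)\cong\cZ^2$ and $\im(d_1)\sbe\cZ^{\q_0}$. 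For torsion-freeness of the cyclic piece the paper uses the hypotheses directly as linear functionals: from $kd_2(F_0)=\sum_F\la_F d_2(F)$ with $\sum_F\la_F=0$, pairing with the characteristic vector of a perfect matching (or summing all coordinates when every face has $r$ arrows) yields a positivity contradiction; you instead work inside $\ker(d_2)\cong H_2(T,\cZ)\cong\cZ$, exhibit the explicit element $\kappa=\sum_{F\in\q_2^+}F-\sum_{F\in\q_2^-}F$ (each arrow bounding exactly one white and one black face), and reduce both hypotheses to the single equality $|\q_2^+|=|\q_2^-|$, which forces the augmentation to vanish on $\ker(d_2)$. Your route buys a slightly stronger and more uniform statement -- freeness of \La already follows from $|\bg_0^+|=|\bg_0^-|$, which each hypothesis implies -- and it avoids the covering-space argument, at the price of leaning twice on the identification of the homology of the complex with $H_*(T,\cZ)$ (for $H_1\cong\cZ^2$ and $\ker d_2\cong\cZ$), a fact the paper states but does not actually use in its own proof. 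The paper's positivity device, on the other hand, is exactly the mechanism it reuses in the next lemma to show $\sum_a x_aa\ne0$ in \La for nonzero $(x_a)\in\cN^{\q_1}$, so the two proofs fit their respective continuations.
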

\begin{proof}
Let us show first that $\coker d_2=\cZ^{\q_1}/\im d_2$
is free.
Let $\la=\sum\al_a a\in\cZ^{\q_1}$ be such that 
$k\la\in\im d_2$ for some $k\ge1$.
We have to show that $\la\in\im d_2$.
We have $d_1(k\la)=0$ and therefore $d_1(\la)=0$.
Let $\pi:\tl T\arr T$ be the universal covering of the torus $T$
and let \pq be the corresponding periodic quiver, 
which is the inverse image of \q in $\tl T$.
We will consider paths in \q and \pq, consisting of
arrows and their inverses.
The condition $d_1(\la)=0$ implies that we can construct a 
weak cycle $u$ in \q with content \la (every arrow is counted with
a plus sign and its inverse with a minus sign).
We can lift $u$ to some weak path $\tl u$ in \pq.
The condition $k\la\in\im d_2$ implies that $\tl u$ is
actually a  cycle. Its content \la can be represented
as a sum of $\pm d_2(F)$ for faces $F$ contained inside the cycle.

We note that
there is an exact sequence
$$\cZ\arr\La\arr\coker d_2\arr0,$$
where $\cZ\arr\La$ is given by $1\mto\ub\om=\wt(d_2(F))$
for some $F\in\q_2$. To show that \La is free we
need to show that the first map is injective.
Assume that $k\ub\om=0$ in \La for some $k\ge1$.
Then $kd_2(F_0)=\sum_{F\in\q_2}\la_Fd_2(F)$ for some $F_0\in Q_2$
and integers $\la_F$ ($F\in\q_2$) with $\sum\la_F=0$.

If there exists some perfect matching $I$ of \bg
then $\sum_{a\in I}x_a=\sum_{F\in\q_2}\la_F$.
There exists some $a\in F_0$ that is contained in $I$ and therefore
$x_a>0$ and $\sum_{F\in\q_2}\la_F>0$, contradicting 
our assumption.

If 	all faces of $\q$ contain the same number of arrows, say $r$,
then $r\sum_{a\in Q_1}x_a=\sum_{F\in\q_2}\la_F$.
For any $a\in F_0$, we have $x_a>0$ and therefore
$\sum_{F\in\q_2}\la_F>0$, contradicting our assumption.
\end{proof}

\begin{dfn}
A bipartite graph (dimer model) is called non-degenerate if all of its edges 
belong to some perfect matching.
\end{dfn}


The following result ensures that the second assumption
of Section \ref{sec:hilbert_schemes} is satisfied.

\begin{lmm}
Assume that one of the following conditions is satisfied
\begin{enumerate}
	\item The bipartite graph $G$ is non-degenerate.
	\item All faces of $\q$ contain the same number of arrows.
\end{enumerate}
Then for any $(x_a)_{a\in Q_1}\in\cN^{Q_1}\ms\set0$,
we have $\sum_{a\in Q_1} x_aa\ne0$ in \La. 
\end{lmm}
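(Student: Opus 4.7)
The strategy is to convert the vanishing condition in $\La$ into a linear relation in $\cZ^{\q_1}$, then apply a cleverly chosen linear functional that separates the positive contribution $\sum x_a a$ from the "boundary" contributions $\sum_F \mu_F d_2(F)$. This mirrors the technique already used at the end of the proof of Lemma \ref{lmm:free group}.

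First, I would unpack the definition of $\La$: the subgroup
$\angs{d_2(F)-d_2(F')}{F,F'\in\q_2}\sb\cZ^{\q_1}$
consists precisely of those elements of the form $\sum_{F\in\q_2}\mu_F d_2(F)$ with $\sum_F\mu_F=0$. Hence $\sum_a x_a a=0$ in $\La$ if and only if there exist integers $(\mu_F)_{F\in\q_2}$ with $\sum_F\mu_F=0$ such that
\[\sum_{a\in\q_1} x_a a=\sum_{F\in\q_2}\mu_F d_2(F)\qquad\text{in }\cZ^{\q_1}.\]
The aim is to apply a functional $\vi:\cZ^{\q_1}\arr\cZ$ to this identity which is non-negative on the LHS and computes $(\sum_F \mu_F)\cdot(\text{constant})$ on the RHS.

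In case (2), I would take $\vi$ to be the sum of all coefficients. Then the LHS equals $\sum_a x_a$, while the RHS equals $r\sum_F\mu_F=0$, where $r$ is the common number of arrows per face. Since each $x_a\ge0$, this forces $x_a=0$ for every $a$.

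In case (1), for any perfect matching $I\sb G_1=\q_1$, let $\vi_I(\la):=\sum_{a\in I}\la_a$. The crucial observation is that every face $F\in\q_2$ contains exactly one edge of $I$ (faces of $\q$ correspond to vertices of $G$, and a perfect matching covers each vertex once), so $\vi_I(d_2(F))=1$ for every $F$. Applying $\vi_I$ to the identity gives
\[\sum_{a\in I}x_a=\sum_{F\in\q_2}\mu_F=0,\]
and since $x_a\ge0$ we get $x_a=0$ for all $a\in I$. By the non-degeneracy hypothesis, every arrow of $\q$ lies in some perfect matching of $G$, and ranging over all such matchings forces $x_a=0$ for every $a\in\q_1$. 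There is no real obstacle here once the functionals $\vi_I$ are identified; the only subtle point worth emphasizing is the bijection between faces of $\q$ and vertices of $G$ that makes $\vi_I(d_2(F))=1$ automatic.
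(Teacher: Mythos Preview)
Your argument is correct and essentially identical to the paper's: both reduce to the identity $\sum_a x_a a=\sum_F \mu_F d_2(F)$ with $\sum_F\mu_F=0$ and then evaluate a suitable linear functional (the characteristic function of a perfect matching in case~(1), the sum of all coordinates in case~(2)). The only cosmetic difference is that in case~(1) the paper fixes one arrow $b$ with $x_b>0$, chooses a single matching through it, and derives a contradiction, whereas you range over all matchings to force every $x_a=0$ directly; these are the same idea.
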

\begin{proof}
Assume that $\sum_{a\in\q_1}x_aa\in\cZ^{\q_1}$ is zero in \La,
where all $x_a\ge0$ and some of them are nonzero.
This means 
$$\sum x_aa=\sum_{F\in \q_2}\la_Fd_2(F),$$
for some $\la_F\in\cZ$ with $\sum\la_F=0$. 

If $G$ is non-degenerate, then for any 
$b\in Q_1$ with $x_b>0$ we can find 
a perfect matching $I$ of \bg containing $b$. 
Then $\sum_{a\in I}x_a=\sum_{F\in\q_2}\la_F$ and the left hand
side is strictly positive, as $x_b>0$ and $b\in I$. 
This contradicts the assumption $\sum_{F\in\q_2}\la_F=0$.

If 	all faces of $\q$ contain the same number of arrows, say $r$,
then $r\sum_{a\in Q_1}x_a=\sum_{F\in\q_2}\la_F$.
But the left hand side of this equation is non-zero and this contradicts
the assumption $\sum_{F\in\q_2}\la_F=0$.
\end{proof}

It follows that the arrows of $Q$ generate a strongly convex cone
in $\La_\cR=\La\ts_\cZ\cR$.
From now on we will always assume that the bipartite graph
is non-degenerate.

\begin{rmr}
The main result of Ishii and Ueda \cite{Ishii} relies on the existence
of some map $R:Q_1\arr \cR_{>0}$, such that
for any face $F\in Q_2$, we have $\sum_{a\in F}R(a)=2$.
If the bipartite graph $G$ is non-degenerate, then
the above lemma allows us to construct a linear map
$R:\La_\cR\arr\cR$ such that $R(a)>0$ for every arrow $a$ and
$R(\ub\om)=2$. This gives a map required in \cite{Ishii}.
\end{rmr}


\section{Path groupoid of a quiver potential}\label{sec:groupoid}
By a weak path in a quiver \q we will mean a path consisting of
arrows of a quiver and their inverses (for any arrow $a$ we
identify $aa\inv$ and $a\inv a$ with trivial paths). 
The usual paths will be sometimes called strict paths.
We will show that the potential $W$ from the previous section
defines an equivalence relation on the set of strict paths
(it will also be called the strict equivalence relation)
and on the set of weak paths (it will also be called the weak equivalence
relation). It is possible that strict paths are weakly equivalent
but not strictly equivalent.
The goal of this section is to prove Proposition
\ref{prp:weak equivalence} stating that two paths with
the same startpoints are weakly equivalent if and only if
their weights are equal.
In Lemma \ref{lmm:weak and strict} we investigate
when the weak equivalence of paths implies the strict equivalence.
This is needed in order to satisfy the first assumption of
section \ref{sec:hilbert_schemes}.

Let \pc be the category of paths of a quiver $\q$ 
(objects are vertices of $\q$ and morphisms are paths in $\q$)
and let \pg be the groupoid of weak paths of \q.

For any arrow $a\in\q_1$ we have $\dd W/\dd a=u-v$ for some paths
$u,v$. We denote the pair $(u,v)$ also by $\dd W/\dd a$ and consider
it as a relation on the set of paths. We denote by $\dd W$ the set of
all relations obtained in this way.

Let \fc (resp. \fg) be the factor category (resp. factor groupoid)
with respect to the equivalence relation $(\dd W)$ 
generated by $\dd W$  
(this is the minimal equivalence relation containing $\dd W$ 
and such that $(u,v)\in(\dd W)$ implies $(xuy,xvy)\in(\dd W)$
for any morphisms $x,y$ of  \pc (resp. \pg) 
with $t(y)=s(u)=s(v)$, $s(x)=t(u)=t(v)$).

\begin{rmr}[Quotients of groupoids]
We can formalize the operation of taking the quotient by an
equivalence relation in a groupoid with the help of
quotients by normal subgroupoids.
Let $G$ be a groupoid. 
We call the set of groups $H=(H_i\sb G(i,i))_{i\in\Ob G}$ 
a normal subgroupoid of $G$ if for any $h\in H_i$
and $x\in G(i,j)$, the element $xhx\inv$ belongs to $H_j$.
One can then construct the quotient groupoid $G/H$
in the usual way, namely $x,y\in G(i,j)$ are equivalent
if and only if $x\inv y\in H_i$. In the above example,
for any arrow $a\in \q_1$, there is a pair of cycles
$(au,av)$ in $W$, and we take the minimal normal
subgroupoid generated by the elements 
$u\inv v=(au)\inv(av)$.
\end{rmr}

\begin{lmm}
Any two cycles of $W$ with the same endpoints
are equal in \fc and \fg.
\end{lmm}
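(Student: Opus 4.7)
The plan is to identify face-cycles at a common basepoint by walking around the vertex face by face and applying the defining relations $\dd W/\dd a$. Fix a vertex $i\in\q_0$, and suppose $w_F$ and $w_{F'}$ are cycles of $W$ based at $i$; then $i$ is a corner of both faces $F$ and $F'$.

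First I would reduce to the case in which $F$ and $F'$ share an arrow incident to $i$. The vertex $i\in\q_0$ corresponds to a face of the bipartite graph $G$, whose boundary vertices alternate white and black; these boundary vertices correspond exactly to the faces of $\q$ meeting at $i$. Hence the faces of $\q$ at $i$ form a cyclic sequence alternating between $\q_2^+$ and $\q_2^-$, with any two consecutive faces sharing a single arrow incident to $i$. A telescoping argument along this cyclic sequence reduces the claim to the case of a single adjacent pair $F^+\in\q_2^+$, $F^-\in\q_2^-$ sharing an arrow $a$ with $s(a)=i$ or $t(a)=i$.

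In that case one writes $w_{F^+}=au$, $w_{F^-}=av$ when $s(a)=i$, and $w_{F^+}=ua$, $w_{F^-}=va$ when $t(a)=i$. Because of the $\pm$ signs in $W=\sum_{F\in\q_2^+}w_F-\sum_{F\in\q_2^-}w_F$, the cyclic derivative reads $\dd W/\dd a=u-v$, so $(u,v)\in\dd W$ and $u$ is identified with $v$ in $\fc$. Multiplying by $a$ on the appropriate side identifies $w_{F^+}$ with $w_{F^-}$ in $\fc$. The statement for $\fg$ follows from the strict case, since every element of $(\dd W)$ in $\pc$ gives the corresponding weak relation in $\pg$.

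The one point to watch is that each $w_F$ is a priori only a necklace; however, the derivative $\dd W/\dd a$ picks out precisely the cyclic rotation of $w_{F^\pm}$ in which $a$ appears at the beginning (or end), which is exactly the rotation based at $s(a)$ (respectively $t(a)$). Thus the identifications land in the correct hom-set $\fc(i,i)$ with no further bookkeeping, and iterating around the cyclic sequence of faces at $i$ yields $w_F=w_{F'}$ in $\fc$ and $\fg$.
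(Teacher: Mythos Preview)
Your proof is correct and follows essentially the same approach as the paper's. Both reduce to the case of two adjacent faces sharing an arrow $a$ incident to $i$, then use the relation $\dd W/\dd a=u-v$ and close under multiplication by $a$ to identify the based face-cycles. Your version is somewhat more explicit: you spell out the cyclic ordering of faces around $i$ and the telescoping reduction, and you handle both the $s(a)=i$ and $t(a)=i$ cases, whereas the paper states the reduction without justification and treats only one of the two cases.
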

\begin{proof}
The cycles of $W$ starting at $i\in\q_0$ correspond to
faces incident to $i$. We may assume that our two 
faces have a common arrow incident to $i$.
This means that the corresponding cycles have either equal first arrows
or equal last arrows. Let us assume that they have
the same first arrow $a$. Then they are of the form
$ua$ and $va$. The set of relations $\dd W$
contains the pair $(u,v)$. Therefore $(ua,va)\in(\dd W)$.
\end{proof}

\begin{rmr}
The groupoid \fg can be also constructed as
a quotient of \pg modulo the relation $u\sim v$
for any two cycles $u,v$ of $W$ with the same endpoints.
\end{rmr}

Let $\pi:\tl T\arr T$ be the universal covering of a torus
and let \pq be the inverse image of \q, called a
periodic quiver.
For any point $i\in\pq_0$ and any path $u$ in $\q$ with
$s(u)=\pi(i)$ there is a unique lifting $u'$ of $u$ in \pq
with $s(u')=i$.
Analogously to the weak equivalence in \q, 
we introduce an equivalence relation on the set of
weak paths in \pq. It is generated by the pairs $(u,v)$, where
$u,v$ are the cycles along the faces of \pq  
having the same endpoints.
An equivalence class of the cycles along the faces 
with the endpoint $i$ will be denoted by $\om_i$. 
We will write just $\om$ if we do not want to specify the 
endpoint of the cycle.

\begin{lmm}
For any weak path $u:i\arr j$ we have $u\om_i\sim\om_j u$.
\end{lmm}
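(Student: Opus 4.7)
The plan is to argue by induction on the length of the weak path $u$, reducing to the case where $u$ is either a single arrow of \pq or the inverse of one.

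For the inductive step, suppose $u = u_2 u_1$ with $u_1\colon i\to k$ and $u_2\colon k\to j$ shorter weak paths for which the lemma is already known. Then
\[
u\om_i = u_2 u_1 \om_i \sim u_2 \om_k u_1 \sim \om_j u_2 u_1 = \om_j u,
\]
so it suffices to handle the two base cases.

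For the base case of a single arrow $a\colon i\to j$, I would exploit the fact that $a$ lies on the boundary of exactly two faces of \pq. Pick one such face $F$ and walk along its boundary starting at $i$ so that $a$ is the first arrow traversed; this exhibits a representative of $\om_i$ of the form $va$, where $v\colon j\to i$ is the remainder of the boundary. Walking around the same face starting at $j$ instead gives the representative $av$ of $\om_j$. That these particular cycles indeed represent the classes $\om_i$ and $\om_j$ uses the \pq-analogue of the preceding lemma: any two face-cycles at a fixed vertex are weakly equivalent, since any two faces meeting at a vertex can be connected by a chain of adjacent faces sharing an arrow incident to that vertex. For these representatives one obtains the literal equality
\[
a\om_i = a(va) = (av)a = \om_j a,
\]
which already implies the desired weak equivalence.

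The case of an inverse arrow $a\inv\colon j\to i$ follows from the previous one by multiplying the relation $a\om_i\sim\om_j a$ on the left and right by $a\inv$ inside the groupoid, yielding $\om_i a\inv\sim a\inv\om_j$. Combined with the composition step above, this completes the induction. The one point requiring care is the invocation that all face-cycles at a fixed vertex of \pq represent the same class $\om_i$, but this is the verbatim transplant of the earlier lemma for \q, so it is not a genuine obstacle. The heart of the argument is simply the picture that inserting the arrow $a$ on the appropriate side of the suitably chosen face-cycle produces, on both sides of the asserted equivalence, exactly the same free word $ava$.
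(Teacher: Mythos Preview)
Your proof is correct and follows essentially the same approach as the paper: reduce to a single arrow $a$, choose a face cycle written as $va$ (the paper writes $wa$), and observe $a\om_i\sim a(va)=(av)a\sim\om_j a$. You are merely more explicit than the paper about the induction on length and the inverse-arrow case, both of which the paper leaves implicit in the phrase ``we have to prove the claim just when $u=a$''.
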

\begin{proof}
We have to prove the claim just when $u=a$, where $a:i\arr j$ is an arrow.
Let $wa$ be some cycle along the face of $Q$.
Then $wa\sim\om_i$ and $aw\sim\om_j$. It follows that 
$a\om_i\sim awa\sim\om_j a$. 
\end{proof}

\begin{lmm}
Let $u,v$ be two weak paths in \pq. 
Then $u\sim v$ if and only if they are equivalent in \q 
(that is, $\pi(u)\sim\pi(v)$) and have the same startpoints.
\end{lmm}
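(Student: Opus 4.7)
The plan has two directions, with the backward one being the substantive part. The forward direction is routine: the weak equivalence on $\pq$ is generated by identifying two face cycles at a common vertex, so it preserves startpoints, giving $s(u)=s(v)$; and the covering map $\pi$ takes each face of $\tl T$ bijectively onto a face of $T$, so it sends face cycles of $\pq$ to face cycles of $\q$ and descends to a homomorphism of the weak groupoids, yielding $\pi(u)\sim\pi(v)$.

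For the backward direction, assume $\pi(u)\sim\pi(v)$ in $\q$ and $s(u)=s(v)$ in $\pq$. I would proceed by induction on the length $n$ of a shortest chain of elementary face moves $\pi(u)=w_0\to w_1\to\cdots\to w_n=\pi(v)$ realising the equivalence. The base case $n=0$ is that $u$ and $v$ are two lifts of the same weak path starting at the same point; by the unique lifting property of the covering $\pi\colon\tl T\to T$ they must coincide, so $u=v$ and in particular $u\sim v$.

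For the inductive step, a single elementary face move decomposes $\pi(u)=x\cdot c_1\cdot y$ and replaces it with $w_1=x\cdot c_2\cdot y$, where $c_1,c_2$ are weak cycles along two faces of $\q$ sharing the common endpoint $i\in\q_0$. Lifting $u$ starting at $s(u)$, uniqueness of lifting produces a matching factorisation $u=\tl x\cdot\tl c_1\cdot\tl y$. The crucial point is that each face of $T\setminus\bg$ is simply connected, so $c_1$ is null-homotopic in $T$; hence its lift $\tl c_1$ starting at $t(\tl x)=:\tl i$ is a genuine closed loop in $\pq$, i.e.\ a face cycle of $\pq$ based at $\tl i$. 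The same reasoning applied to $c_2$ yields a face cycle $\tl c_2$ of $\pq$ at $\tl i$. By the very definition of weak equivalence on $\pq$ we have $\tl c_1\sim\tl c_2$, and multiplying by $\tl x$ on the left and $\tl y$ on the right gives $u=\tl x\cdot\tl c_1\cdot\tl y\sim\tl x\cdot\tl c_2\cdot\tl y$. The right-hand side is precisely the lift $\tl w_1$ of $w_1$ starting at $s(u)$, and $s(\tl w_1)=s(v)$ while $\pi(\tl w_1)=w_1$ is now connected to $\pi(v)$ by only $n-1$ moves, so the inductive hypothesis applied to $\tl w_1$ and $v$ completes the chain.

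The one delicate point, and the only place where the geometry of the brane tiling actually enters, is the claim that the lift of a face cycle of $\q$ is closed in $\pq$ rather than translating by a nontrivial element of the deck group. This is exactly ensured by the standing assumption that the connected components of $T\setminus\bg$ are simply connected (in fact convex polygons), so that each face cycle is null-homotopic in $T$ and lifts to a loop in $\tl T$.
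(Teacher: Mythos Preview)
Your proof is correct. The paper states this lemma without proof, treating it as a routine consequence of covering-space theory; your argument via unique path lifting and induction on the length of a chain of elementary face moves is exactly the standard way to fill in such a statement, and the one point that needs the geometry---that a face cycle of $Q$ lifts to a closed face cycle of $\pq$ because faces are simply connected discs---is identified and handled correctly.
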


\begin{lmm}\label{lmm:cycle in pq}
Any weak cycle in \pq is equivalent to $\om^k$ for some $k\in\cZ$.
\end{lmm}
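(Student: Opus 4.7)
The plan is to exploit the contractibility of the universal cover $\tl T$ to express an arbitrary weak cycle $u$ in $\pq$ as a product of conjugates of face cycles, and then apply the preceding two lemmas on the class $\om$.

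First I would observe that $\pq$ is the 1-skeleton of the CW-decomposition of $\tl T$ whose 2-cells are the lifts of the faces of $\q$, and that $\tl T$ is contractible. The standard van Kampen presentation of the fundamental group of a 2-complex then gives
$$\pi_1(\pq, i) \big/ \ang{\ang{\dd F : F \in \pq_2}} = \pi_1(\tl T, i) = 1,$$
so $\pi_1(\pq, i)$ is equal to the normal closure of its face boundary cycles. Because the definition of a weak path already builds in the free cancellations $aa^{-1}$ and $a^{-1}a$, the free groupoid of weak paths in $\pq$ may be identified with the fundamental groupoid of the underlying 1-complex. Consequently any weak cycle $u$ at $i$ admits a presentation
$$u = \prod_{k=1}^{N} g_k \, F_k^{\eps_k}\, g_k^{-1}, \qquad \eps_k \in \set{\pm 1},$$
where each $F_k$ is a face cycle at some vertex $i_k$ and $g_k : i \arr i_k$ is a weak path.

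Passing now to the equivalence $\sim$ on $\pq$, the defining relation makes each $F_k$ equivalent to $\om_{i_k}$, while the preceding lemma gives the centrality $g_k \om_{i_k} \sim \om_i g_k$ (and, by inverting it, $g_k \om_{i_k}^{-1} \sim \om_i^{-1} g_k$). In either case one obtains
$$g_k \om_{i_k}^{\eps_k} g_k^{-1} \sim \om_i^{\eps_k},$$
so multiplying the factors yields $u \sim \om_i^{\sum_k \eps_k}$, proving the lemma.

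The main point that requires care is the van Kampen presentation: it is standard in algebraic topology but relies on the faces of $\q$ genuinely equipping $T$ (and hence $\tl T$) with a CW 2-complex structure whose 2-cells have boundary contained in $\q$. This is an immediate consequence of the brane tiling hypothesis that the components of $T \ms G$ are simply connected polygons together with the combinatorial duality between $\q$ and $G$.
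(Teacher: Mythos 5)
Your proof is correct, but it takes a genuinely different route from the paper's. The paper argues combinatorially in the plane: it first treats a weak cycle without self-intersections, writing it as a product of the boundary cycles of the faces it encloses (hence a power of $\om$), and then handles a general weak cycle by repeatedly extracting a simple subcycle, replacing it by a power of $\om$, and moving that power to the end using the lemma that $\om$ commutes with all paths. You instead invoke the standard presentation of the fundamental group of a 2-complex: since the lifted faces give a CW structure on the contractible space $\tl T$ with 1-skeleton \pq, the kernel of $\pi_1(\pq,i)\to\pi_1(\tl T,i)=1$ is normally generated by conjugated face boundaries, so every weak cycle is a product of conjugates $g_kF_k^{\pm1}g_k^{-1}$; each $F_k$ lies in the class $\om_{i_k}$ by definition, and the commutation lemma collapses each conjugate to $\om_i^{\pm1}$ (note that the congruence respects inverses, since $u\sim v$ gives $v^{-1}uu^{-1}\sim v^{-1}vu^{-1}$, so your $\eps_k=-1$ case is legitimate). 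What your approach buys is that it replaces the paper's somewhat terse planar claim (that a simple weak cycle is a product of the enclosed face cycles ``taken in the right direction'') and its induction on self-intersections by a single citation of a standard theorem in algebraic topology, at the cost of importing that machinery; the paper's argument is more elementary and self-contained but leaves more geometric details to the reader. Only cosmetic caveat: your composition convention is opposite to the paper's (the paper writes $xuy$ with the rightmost factor performed first, so the conjugate loop at $i$ would be written $g_k^{-1}F_kg_k$ there), which does not affect the substance.
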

\begin{proof}
Assume that $u$ is a weak cycle without self-intersections.
Then it can be written as a product of cycles (taken in right
direction) along the faces contained in $u$. But this product
is a power of \om. In the general case, we can find a subcycle without
self-intersections, represent it as a power of \om, and move it to the
end of the cycle using the fact that \om commutes with paths. Then we repeat
our procedure. 
\end{proof}

For any weak path $u$ in \q we define its content $|u|\in\cZ^{\q_1}$
by counting every arrow with a plus sign and its inverse with a
minus sign. As for strict paths, we define $\wt(u):=\wt(|u|)$.
For any weak path $u$ in \pq, we define its weight by
$\wt(u):=\wt(\pi(u))\in\cZ^{\q_1}$. Recall that $\wt(\om)=\ub\om$.

\begin{lmm}
Assume that $u$ is a weak path in \pq such that $\wt(u)=0$.
Then $u$ is a weak cycle and it is equivalent to the trivial path.
\end{lmm}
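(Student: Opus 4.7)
The plan is to reduce to Lemma \ref{lmm:cycle in pq}, which identifies weak cycles in $\pq$ with powers of $\om$, and then use the freeness given by Lemma \ref{lmm:free group} to pin the exponent down to zero. I would proceed in two steps: first show that $u$ is a weak cycle in $\pq$, then apply the classification.

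For the first step, expand the hypothesis $\wt(u)=\wt(\pi(u))=0$ in $\La$ using the exact sequence
$$\cZ\arr\La\arr\coker d_2\arr0$$
appearing in the proof of Lemma \ref{lmm:free group}. The image of $|\pi(u)|\in\cZ^{\q_1}$ in $\coker d_2$ then vanishes, so $|\pi(u)|\in\im d_2$. Applying $d_1$ gives $d_1(|\pi(u)|)=0$, in particular $\pi(s(u))=\pi(t(u))$. More strongly, viewing $\pi(u)$ as a signed cellular $1$-chain on the torus $T$ (each inverted arrow counted with a minus sign), the condition $|\pi(u)|\in\im d_2$ says that this chain is a $2$-boundary, hence its class in $H_1(T,\cZ)$ is trivial. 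Since $H_1(T,\cZ)$ coincides with the deck group of the universal cover $\tl T\arr T$, and the deck transformation taking $s(u)$ to $t(u)$ in $\pq$ is precisely the homology class of the loop $\pi(u)$, it must be trivial. Therefore $s(u)=t(u)$ in $\pq$, \ie $u$ is a weak cycle.

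For the second step, Lemma \ref{lmm:cycle in pq} furnishes an integer $k$ with $u\sim\om^k$. Since weak equivalence is generated by pairs of face cycles (both of weight $\ub\om$), the weight descends to the equivalence classes, so
$$0=\wt(u)=k\ub\om\in\La.$$
The standing non-degeneracy assumption on $G$ places us inside the hypotheses of Lemma \ref{lmm:free group}, whose statement asserts that the map $\cZ\arr\La$, $1\mto\ub\om$, is injective. Hence $k=0$, meaning $u$ is equivalent to the trivial path.

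The main obstacle is the passage in step one from the purely combinatorial condition $|\pi(u)|\in\im d_2$ on a weak path to the topological conclusion $s(u)=t(u)$ in $\pq$. It rests on the standard dictionary between $H_1(T,\cZ)$ and the deck group of $\tl T\arr T$, but one must verify that this dictionary applies to weak (signed) paths, which it does because the cellular boundary operator on $C_1(T,\cZ)$ treats inverted arrows with the opposite sign in the same way that our content $|\cdot|$ does. Once this is in place, the remainder of the argument is purely algebraic.
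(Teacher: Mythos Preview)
Your proof is correct and follows the same two-step strategy as the paper: deduce from $|\pi(u)|\in\im d_2$ that $u$ closes up in $\pq$, then invoke Lemma \ref{lmm:cycle in pq} and the infinite order of $\ub\om$. The only cosmetic difference is that the paper phrases step one concretely (each arrow in $\pq$ determines a translation vector in the plane, and face cycles have zero displacement) where you use the equivalent homology/deck-group formulation; note also that the injectivity of $1\mto\ub\om$ is established in the \emph{proof} of Lemma \ref{lmm:free group} rather than in its statement, which only asserts that $\La$ is free.
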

\begin{proof}
Any arrow in \pq defines a vector in the plane, and two
arrows $a,b$ define the same vector if $\pi(a)=\pi(b)$.
It follows that the vector between $s(u)$ and $t(u)$ is
determined by $\pi(u)$, or just $|\pi(u)|$.
We can write $|\pi(u)|=\sum_{F\in\q_2}\la_Fd_2(F)\in\cZ^{\q_1}$, where $\sum\la_F=0$.
Any $d_2(F)$ determines the zero vector in the plane. 
It follows that $|\pi(u)|$ determines the zero vector in the plane
and therefore $u$ is a cycle.
According to Lemma \ref{lmm:cycle in pq}, $u\sim\om^k$ for some $k\in\cZ$.
Then $\wt(u)=\wt(\om^k)=k\ub\om=0$ and by the proof
of Lemma \ref{lmm:free group}, we have $k=0$.
\end{proof}

\begin{prp}\label{prp:weak equivalence}
Weak paths in $Q$ (or in \pq) having the same startpoints
are equivalent if and only if their weights are equal.
\end{prp}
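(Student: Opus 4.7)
The plan is to reduce the proposition to the weight-zero lemma immediately preceding it (the one stating that any weight-zero weak path in $\pq$ is a weak cycle equivalent to the trivial path).

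First I dispatch the easy direction: weak equivalence preserves weight. For any arrow $a$, the relation $\dd W/\dd a = u - v$ arises from the two face cycles $au$ and $av$ through $a$, both of which have weight $\ub\om$; hence $\wt(u) = \wt(v) = \ub\om - \wt(a)$. Since weight is additive on concatenation and satisfies $\wt(a\inv) = -\wt(a)$, the equivalence relation generated by $(\dd W)$ on weak paths preserves $\wt$. So if $u\sim v$ then $\wt(u)=\wt(v)$.

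For the nontrivial direction, let $u,v$ be weak paths in $\q$ with $s(u) = s(v)$ and $\wt(u)=\wt(v)$. I first verify that $t(u) = t(v)$: since $d_1 \circ d_2 = 0$, the map $d_1:\cZ^{\q_1}\arr\cZ^{\q_0}$ kills the generators $d_2(F)-d_2(F')$ of $\ker\wt$ and hence factors through $\La$. Applied to $|u|-|v|$, whose image in $\La$ is $\wt(u)-\wt(v)=0$, this gives $(t(u)-s(u))-(t(v)-s(v))=0$, so $t(u)=t(v)$. Therefore $c := uv\inv$ is a well-defined weak cycle at $s(u)$ with $\wt(c)=0$. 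Next, lift $c$ to a weak path $w$ in $\pq$ starting at any chosen preimage of $s(u)$; this lift exists and is unique because $\pq$ is the universal cover. Then $\wt(w)=\wt(\pi(w))=\wt(c)=0$, so by the previous lemma $w$ is a weak cycle equivalent to the trivial path in $\pq$. Applying $\pi$ and using the lemma that two weak paths in $\pq$ are equivalent iff their projections are equivalent in $\q$ (with matching startpoints), we get $c=\pi(w)\sim\text{trivial}$ in $\q$, whence $u\sim v$.

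The version for weak paths in $\pq$ now follows by a short two-step argument: given $u,v$ in $\pq$ with $s(u)=s(v)$ and $\wt(u)=\wt(v)$, the projections $\pi(u),\pi(v)$ have equal startpoints in $\q$ and satisfy $\wt(\pi(u))=\wt(\pi(v))$, so by the $\q$ case $\pi(u)\sim\pi(v)$; then the lifting lemma (equivalence downstairs plus matching startpoints upstairs implies equivalence upstairs) yields $u\sim v$ in $\pq$.

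The main subtlety is step where we form the lift $w$: one must use that in a universal cover only the starting vertex needs to be specified to define a lift, and crucially the previous lemma does not require $w$ to be closed a priori — it deduces both closedness and triviality from the weight being zero. That is what makes the $uv\inv$ construction work even though we verify $t(u)=t(v)$ only in $\q$ and not in $\pq$.
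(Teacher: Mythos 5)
Your proof is correct and follows essentially the same route as the paper, whose entire proof is ``Follows immediately from the previous lemma'': you reduce to the weight-zero lemma by forming the weak cycle $v\inv u$ (after the easy check that equivalence preserves weight), lifting to \pq, and projecting back. The extra details you supply (the endpoint check via $d_1\circ d_2=0$ and the lift/project lemma) are exactly what the paper leaves implicit, modulo harmless sign/composition-order conventions.
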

\begin{proof}
Follows immediately from the previous lemma.
\end{proof}

\begin{rmr}
The above result is very similar to the first assumption 
of Section \ref{sec:hilbert_schemes}. The only difference is that
paths in \q having the same weight and the same startpoint are 
weakly equivalent but not necessarily strictly equivalent. 
\end{rmr}


\begin{rmr}\label{rem:description of De}
For any two nodes $i,j\in\pq_0$ and any two weak paths $u,v$
between them we have $v\sim u\om^k$ for some $k\in\cZ$. 
It follows that there exists a shortest strict path $v_{ij}$ between
$i$ and $j$, i.e.\ a path such that any other strict path between
$i$ and $j$ is weakly equivalent to $v_{ij}\om^k$, $k\ge0$.
It follows that the set of weak equivalence classes of strict paths starting
at some fixed point $i_0\in\pq_0$ can be identified with $\pq\xx\cN$, and
the set of weak equivalence classes of weak paths starting at $i_0$
can be identified with $\pq\xx\cZ$.
\end{rmr}

\begin{lmm}\label{lmm:weak and strict}
The following conditions are equivalent
\begin{enumerate}
	\item Given two paths $u,v$ with the same endpoints and an arrow $a$
	with $s(a)=t(u)$, if $au$ is strictly equivalent to $av$ then
	$u$ is strictly equivalent to $v$. 
	\item Given two paths $u,v$ with the same endpoints and an arrow $a$
	with $t(a)=s(u)$, if $ua$ is strictly equivalent to $va$ then
	$u$ is strictly equivalent to $v$. 
	\item The map $\fc\arr\fg$ is injective.
	\item Two paths in \q with the same startpoints
	are strictly equivalent if and only if their weights are equal.  
\end{enumerate}
\end{lmm}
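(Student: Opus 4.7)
The plan is to prove the equivalence by establishing (3)$\Leftrightarrow$(4) together with (3)$\Rightarrow$(1), (3)$\Rightarrow$(2), and (1)$\Rightarrow$(3); the remaining (2)$\Rightarrow$(3) will follow by applying (1)$\Rightarrow$(3) to the opposite quiver $Q\op$ with opposite potential. The equivalence (3)$\Leftrightarrow$(4) is a direct reformulation: injectivity of $\fc\to\fg$ says exactly that two strict paths which are weakly equivalent must already be strictly equivalent, and by Proposition \ref{prp:weak equivalence} weak equivalence of paths with a common startpoint is the same as equality of weights. (Equal weights and equal startpoints automatically force equal endpoints, since $d_1\circ d_2=0$ implies the endpoint of a path is determined by its startpoint together with its class in $\La$.)

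For the easy direction (3)$\Rightarrow$(1) and (3)$\Rightarrow$(2): if $au\sim_{\mathrm{strict}}av$ then $[au]=[av]$ in the groupoid $\fg$, and cancelling the invertible element $[a]$ on the left by composition with $[a]\inv$ gives $[u]=[v]$ in $\fg$; by injectivity (3) we conclude $u\sim_{\mathrm{strict}}v$. The right cancellation statement is symmetric, using right multiplication by $[a]\inv$.

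The main step is (1)$\Rightarrow$(3). Let $u,v$ be strict paths with $u\sim_{\mathrm{weak}}v$, and fix a chain $u=w_0,w_1,\ldots,w_n=v$ of weak paths realising this equivalence, where each consecutive pair differs either by an elementary face-relation substitution $(s_k,t_k)\in\partial W$ in a weak context $w_{k-1}=x_ks_ky_k$, $w_k=x_kt_ky_k$, or by a groupoid cancellation $aa\inv\leftrightarrow 1$. The plan is to produce a strict path $p$ ending at $s(u)$ such that left multiplication by $p$ turns the whole chain into a chain of strict paths connected by strict face-relation substitutions: since the chain is finite, only finitely many inverse arrows occur altogether in the $w_k$ and in the weak contexts, and $p$ may be chosen long enough to absorb all of them via the reductions $aa\inv=1$. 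Once $pu\sim_{\mathrm{strict}}pv$, iterated application of (1), one arrow of $p$ at a time, gives $u\sim_{\mathrm{strict}}v$. The implication (2)$\Rightarrow$(3) is then obtained by applying this same argument to $Q\op$, under which left cancellation becomes right cancellation and both $\fc$ and $\fg$ get replaced by their opposites.

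The main obstacle lies in the construction of the auxiliary path $p$ in the step (1)$\Rightarrow$(3): one must verify that left multiplication really does transport each elementary weak step into an elementary strict step, which requires handling carefully the inverse arrows occurring throughout the chain. In particular, inverse arrows appearing on the right side of a weak context $y_k$ cannot be absorbed by a left-appended $p$, so one either needs to also append a strict path $q$ on the right (invoking a right-cancellation statement obtained from the opposite-quiver symmetry) or to rearrange the chain so that all inverses arise on the left; this combinatorial bookkeeping is the technical heart of the argument.
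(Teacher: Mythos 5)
Your outer architecture is fine: (3)$\Leftrightarrow$(4) does follow from Proposition \ref{prp:weak equivalence}, (3)$\Rightarrow$(1),(2) is correct by cancelling the invertible $[a]$ in \fg, and reducing everything to one cancellation-plus-long-path implication is in the same spirit as the paper. The genuine gap is in the one implication that carries all the content, (1)$\Rightarrow$(3). The mechanism you propose -- ``$p$ may be chosen long enough to absorb all of them via the reductions $aa\inv=1$'' -- does not work: free cancellation only removes an arrow and its inverse when they are \emph{adjacent}, so a strict path appended at one end of the chain cannot touch inverse arrows sitting in the middle of the $w_k$ or inside the contexts $x_k,y_k$. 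The way inverse arrows are actually eliminated is through the face relations, not free cancellation: if $wa$ is a face cycle then $a\inv\sim\om\inv w$, and one needs the (strictly valid) commutation $u\,\om_{s(u)}\sim\om_{t(u)}\,u$, proved from the relation identifying any two face cycles at a vertex, to push all the resulting factors of $\om^{\pm1}$ to one end. This is precisely the bookkeeping you defer to the last paragraph as ``the technical heart,'' so the proof is incomplete exactly where the statement is nontrivial. Worse, your fallback for the handedness problem is circular: inside the proof of (1)$\Rightarrow$(3) you only have \emph{left} cancellation for $Q$; the opposite-quiver symmetry turns that into right cancellation for $Q^{\rm op}$, not for $Q$, so ``invoking a right-cancellation statement'' presupposes (1)$\Rightarrow$(2), which you never prove.

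For comparison, the paper closes these gaps in two short steps. First it proves (1)$\Rightarrow$(2) directly: from $ua\sim va$ choose $w$ with $aw\sim\om_{s(u)}$, deduce $\om_{t(u)}u\sim u\om_{s(u)}\sim uaw\sim vaw\sim\om_{t(u)}v$ using the commutation of $\om$, and strip the face cycle arrow by arrow with (1). Then for (2)$\Rightarrow$(3) it describes \fg as \pc with adjoined elements $t_i$ inverse to the face cycles; the resulting normal form shows that two weakly equivalent strict paths $u,v$ become \emph{strictly} equivalent after inserting $r$ face cycles into each, i.e. $u\om^r\sim v\om^r$ strictly, and (2) then cancels $\om^r$. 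If you want to keep your plan, you must either reproduce this normal-form argument (or an equivalent rewriting argument using $a\inv\sim\om\inv w$ and the commutation of $\om$) to get $u\om^r\sim_{\mathrm{strict}}v\om^r$, and prove (1)$\Rightarrow$(2) separately before using any right cancellation.
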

\begin{proof}
$(1)\imp(2)$. Assume that two paths $u,v$ have the same endpoints
$i,j$ and $ua\sim va$ for some arrow $a:k\arr i$. We can find a path $w$
such that $aw\sim \om_i$. 
Then 
$$\om_ju\sim u\om_i\sim uaw\sim vaw\sim v\om_i\sim\om_jv.$$
Using condition $(1)$ we obtain $u\sim v$.\\
$(2)\imp(3)$. 
Let us give a new description of the groupoid
$\fg$. It is obtained from the category \pc by adding
morphisms $t_i:i\arr i$, $i\in  Q_0$,
such that $at_i=t_ja$ for any arrow $a:i\arr j$ and 
$t_iw=1$ for any cycle $w$ along a face starting at $i\in Q_0$. 
It follows that two strict paths $u,v$ with the same endpoints
are weakly equivalent if and only if $v$ is obtained from $u$
by first inserting words of the form $t_iw$ in $u$ (say $r$ times), 
then moving the $t_i$'s inside the obtained word, and finally deleting $r$
words of the form $t_iw$. 
Equivalently, we can insert $r$ cycles along faces into $u$ and $v$
so that we obtain equal words. This implies that $u\om^r$
and $v\om^r$ are strictly equivalent. Condition $(2)$ now implies
that $u$ and $v$ are strictly equivalent.\\
$(3)\imp(4)$. This follows from Proposition \ref{prp:weak equivalence}.\\
$(4)\imp(1)$. Clear.
\end{proof}

We will call the following condition the first consistency 
condition and we will assume it throughout the paper.

\begin{condition}\label{cond:B}
The equivalent conditions of Lemma \ref{lmm:weak and strict} are satisfied. 
\end{condition}

\begin{rmr}\label{rmr:R-charge}
It follows from \cite[Lemma 5.3.1]{HHV} and \cite{Broomhead1}
that the condition $(4)$ of Lemma \ref{lmm:weak and strict}
is satisfied if there exists an $R$-charge
on the brane tiling. An $R$-charge is a collection 
$(R_a)_{a\in Q_1}\in (0,1)^{Q_1}$ such that for every
face $F\in Q_2$ we have
$$\sum_{a\in F}R_a=2$$
and for every node $i\in Q_0$ we have
$$\sum_{a\ni i}(1-R_a)=2.$$
An easily verified criterion for the existence of $R$-charges
is given in \cite{KenyonSchlenker}.
We thank Alastair King for this remark.
\end{rmr}

Under the above condition and the non-degeneracy of the brane tiling,
all the assumptions of Section \ref{sec:hilbert_schemes} are satisfied.
Then Lemma \ref{lmm:La-graded mod. and ideals} implies

\begin{prp}\label{prp:formula1}
We have
$$Z^i(A)=\sum_{\stackrel{\Om\sb\De_i}{\text{fin.ideal}}}
\prod_{u\in\Om}x_{t(u)}\in\cQ\pser{x_j|j\in\q_0}.$$
\end{prp}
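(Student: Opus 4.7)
The plan is to combine the torus localization from Section \ref{sec:hilbert_schemes} with the combinatorial bijection in Lemma \ref{lmm:La-graded mod. and ideals}, once we verify that under the standing hypotheses (non-degeneracy of \bg and Condition \ref{cond:B}) both assumptions of Section \ref{sec:hilbert_schemes} genuinely hold for $A=\cC Q/(\dd W)$ with weight function $\wt:Q_1\arr\La$. The first assumption is Condition \ref{cond:B} together with Proposition \ref{prp:weak equivalence}: two strict paths with the same startpoint and the same weight are weakly equivalent, hence strictly equivalent, hence proportional in $A$. The second assumption (non-triviality of the weight of any non-trivial path) is exactly the content of the lemma following Lemma \ref{lmm:free group}, which uses non-degeneracy of $G$.

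Next, I would unwind the definition of $Z^i(A)$. By construction,
\[
Z^i(A)=\sum_{\al\in\cN^{\q_0}}\hi_c(\Hilb^\al_i(A))\,x^\al,
\]
and by the general localization principle recalled in Section \ref{sec:hilbert_schemes},
\[
\hi_c(\Hilb^\al_i(A))=\hi_c(\Hilb^\al_i(A)^T).
\]
The theorem in Section \ref{sec:hilbert_schemes} identifies $T$-fixed points with \La-graded $i$-cyclic $A$-modules of dimension $\al$ (up to isomorphism, since the grading on the generator is forced to sit in degree zero and $m$ generates $M$). Lemma \ref{lmm:La-graded mod. and ideals} identifies isomorphism classes of such modules with finite ideals $\Om\sb\De_i$. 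So $\Hilb^\al_i(A)^T$, as a set, is in bijection with the finite ideals of $\De_i$ whose associated module has dimension vector $\al$.

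To pass from the cardinality of this set to $\hi_c$, I need to observe that $\Hilb^\al_i(A)^T$ is a reduced finite union of points. This is because each $T$-fixed isomorphism class is represented by a unique point of the moduli space $M\sst_{\hat\te}(\hat A,\hat\al)$ of $\hat\te$-stable (hence simple) objects, so distinct ideals give distinct, isolated, reduced fixed points; consequently $\hi_c(\Hilb^\al_i(A)^T)$ equals the number of finite ideals $\Om\sb\De_i$ producing dimension vector $\al$. Here the main thing to be careful about is the matching between the combinatorics and the dimension vector: under the bijection of Lemma \ref{lmm:La-graded mod. and ideals}, the module attached to $\Om$ has $M_{\wt(u)}$ one-dimensional for $u\in\Om$ and $0$ otherwise, and $M_{\wt(u)}$ lies in the vertex component indexed by $t(u)\in\q_0$, so $\dim M_j=\#\sets{u\in\Om}{t(u)=j}$ and hence $x^{\dim M}=\prod_{u\in\Om}x_{t(u)}$.

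Assembling these, I interchange the two summations to get
\[
Z^i(A)=\sum_{\al\in\cN^{\q_0}}\Bigl(\sum_{\substack{\Om\sb\De_i\\ \dim M(\Om)=\al}}1\Bigr)x^\al
=\sum_{\substack{\Om\sb\De_i\\ \text{fin. ideal}}}\prod_{u\in\Om}x_{t(u)},
\]
which is the claimed formula. The only genuinely delicate point in the whole argument is checking that the two assumptions of Section \ref{sec:hilbert_schemes} actually hold in the brane-tiling setting; that is precisely what Sections \ref{sec:brane}--\ref{sec:groupoid} have just established. Everything after that is a tautological rewriting of the localization output.
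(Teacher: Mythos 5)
Your argument is correct and follows essentially the same route as the paper: verify the two assumptions of Section \ref{sec:hilbert_schemes} (via Condition \ref{cond:B} with Proposition \ref{prp:weak equivalence}, and non-degeneracy for the weight non-vanishing), then apply torus localization, the identification of $T$-fixed points with \La-graded $i$-cyclic modules, and Lemma \ref{lmm:La-graded mod. and ideals} to count finite ideals, with the dimension-vector bookkeeping $\dim M_j=\#\sets{u\in\Om}{t(u)=j}$ giving the monomial $\prod_{u\in\Om}x_{t(u)}$. Your explicit remark that the fixed locus is a finite reduced set of points is a detail the paper leaves implicit, but it is not a departure from its proof.
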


This formula allows the computation of $Z^i(A)$ with the help
of a computer. 
Let us discuss some examples that were considered earlier in the literature.

\begin{exm}
The basic example is $\cC^3$. 
\begin{figure}[h!]%
{\includegraphics[height=4cm]{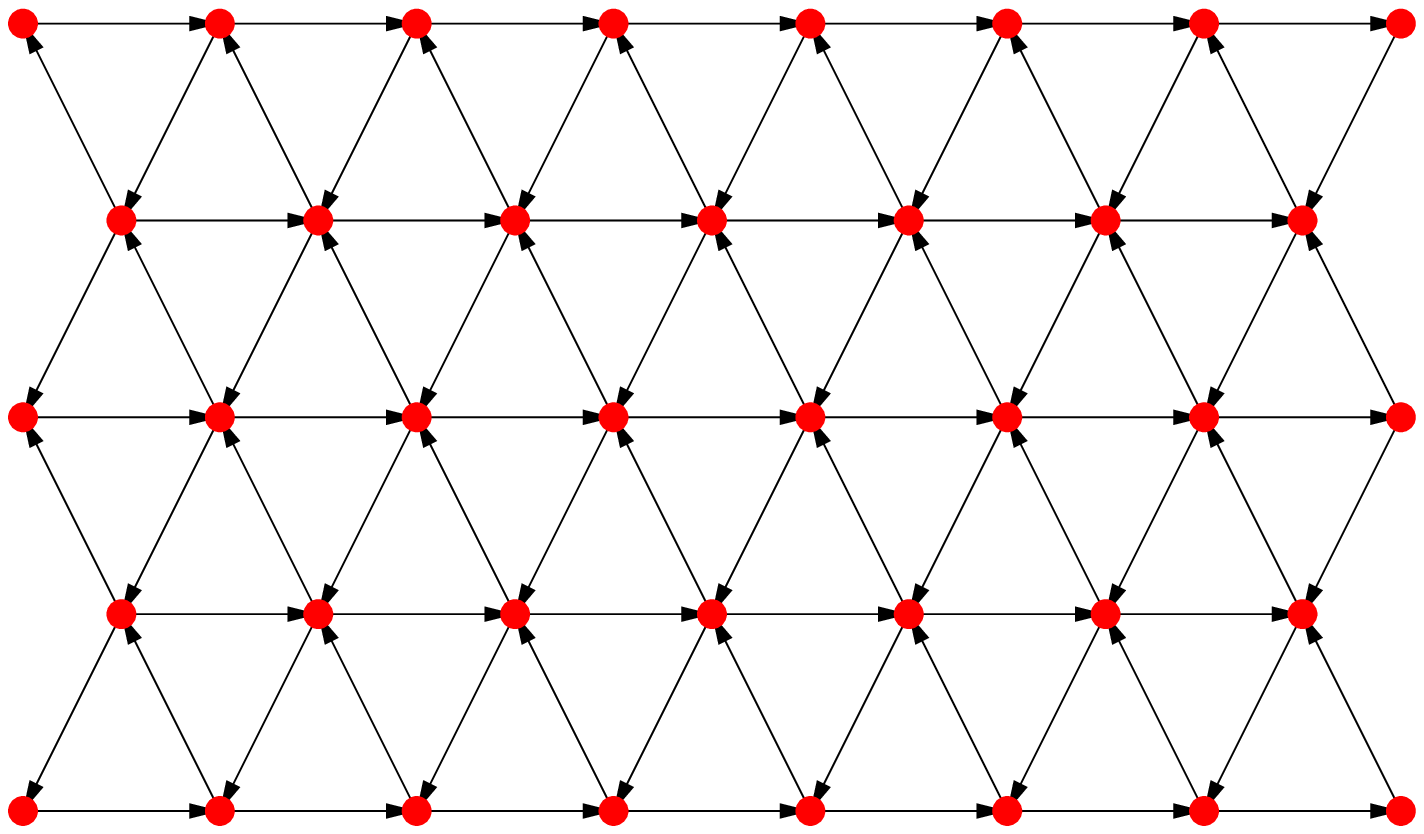}}%
\llap{\includegraphics[height=4cm]{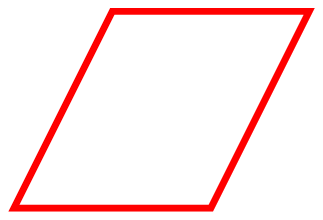}}%
\caption{The periodic quiver and a fundamental domain for $\cC^3$.}
\end{figure}
It corresponds to the quiver with one node $0$,
three loops $x,y,z$, and potential $W=xyz-xzy$. The quiver potential
algebra is $A=\cC Q/(\dd W)\iso\cC[x,y,z]$. Every path in $Q$ is equivalent
to $x^ky^lz^m$ for some $(k,l,m)\in\cN^3$ and therefore the poset of 
paths $\De_0$ can be identified with $\cN^3$. Ideals in $\De_0$ 
correspond to the $3$-dimensional Young diagrams.
The generating function $Z^0(A)$ was known already to MacMahon.
\end{exm}

\begin{exm}
Szendr\H oi \cite{Szen1} considered the case of the conifold.
\begin{figure}[h!]%
{\includegraphics[height=4cm]{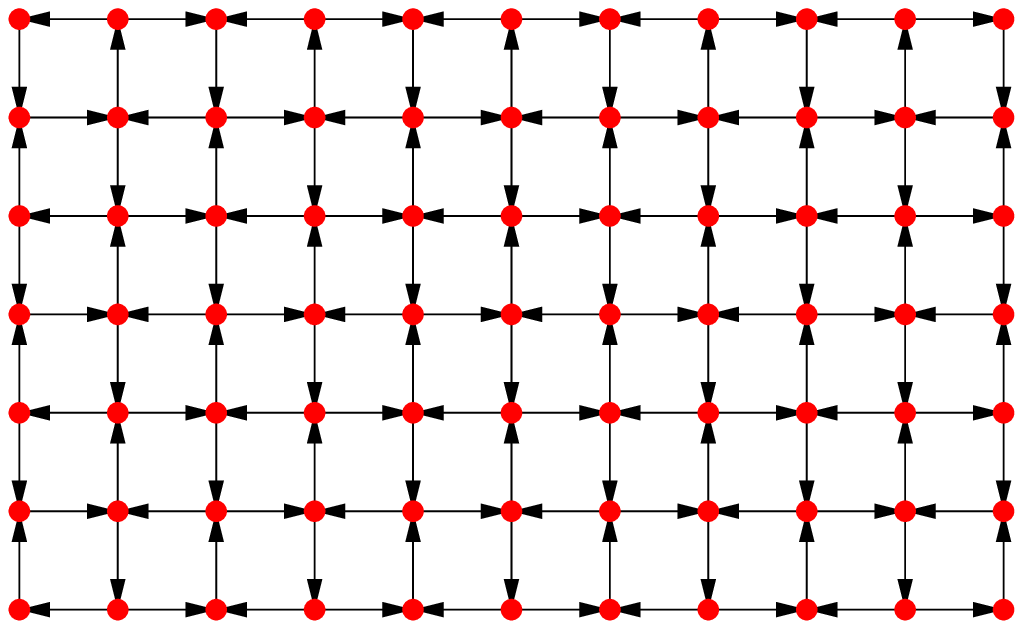}}%
\llap{\includegraphics[height=4cm]{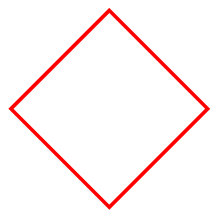}}%
\caption{The periodic quiver and a fundamental domain for the conifold.}
\end{figure}
It corresponds to the quiver with nodes $0,1\in\cZ_2$ and arrows $x_i:i\to i+1$,
$y_i:i+1\to i$, $i=0,1$.  The potential is given by
$$W=x_0x_1y_1y_0-x_1x_0y_0y_1.$$
The poset $\De_0$ corresponds to the pyramid arrangement from \cite{Szen1}.
Ideals in $\De_0$ correspond to the pyramid partitions from \cite{Szen1}.
A nice closed formula for $Z^i(A)$ was conjectured by Szendr\H oi and
proved by Young \cite{Young1}.
\end{exm}

\begin{exm}
The orbifold $\cC^3/\cZ_n$ with a group action $\frac 1n(1,0,-1)$. 
\begin{figure}[h!]%
{\includegraphics[height=4cm]{c33x.eps}}%
\llap{\includegraphics[height=4cm]{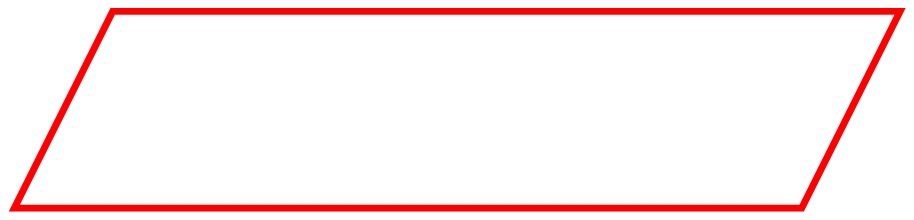}}%
\caption{The periodic quiver and a fundamental domain for $\cC^3/\cZ_4$.}
\end{figure}
The corresponding Mckay quiver is given by
$$\xymatrix{
0\ar@/^/[r]^{x_0}\ar@(ul,ur)^{z_0}[]&
1\ar@/^/[r]^{x_1}\ar@(ul,ur)^{z_1}[]\ar[l]^{y_0}&
2\ar@/^/[r]^{x_2}\ar@(ul,ur)^{z_2}[]\ar[l]^{y_1}&
\dots\ar@/^/[r]^{x_{n-2}}\ar[l]^{y_2}&
n-1\ar@{<->}@/^2.3pc/[llll]_{x_{n-1},y_{n-1}}\ar@(ul,ur)[]^{z_{n-1}}\ar[l]^{y_{n-2}}}$$
The potential is given by
$$W=\sum_{i=1}^n(x_iy_iz_{i+1}-z_{i}y_ix_i).$$
The poset of paths $\De_0$ can be identified with the lattice $\cN^3$. 
New arrow $x$ means increasing of $x$-coordinate,
$y$ -- increasing of $y$-coordinate, $z$ -- increasing of 
$z$-coordinate. The ideals
in $\De_0$ correspond to 
$3$-dimensional Young diagrams. 
The partition function $Z^0(A)$ is now an element
of $\cZ\pser{x_0,\dots,x_{n-1}}$.
A simple formula for $Z^0(A)$ was also found by Young
\cite{Young1}.
\end{exm}
\section{Path poset and perfect matchings}\label{sec:perf mat}
Let us first recall the nice correspondence between $3$-dimensional
Young diagrams and perfect matchings of the plane tiling by equilateral
triangles
(we use the dual point of view and call the set of edges a perfect
matching if every face contains exactly one edge from this set).
The following picture should explain this correspondence

\begin{figure}[h!]%
\hfill%
{\includegraphics[height=4cm]{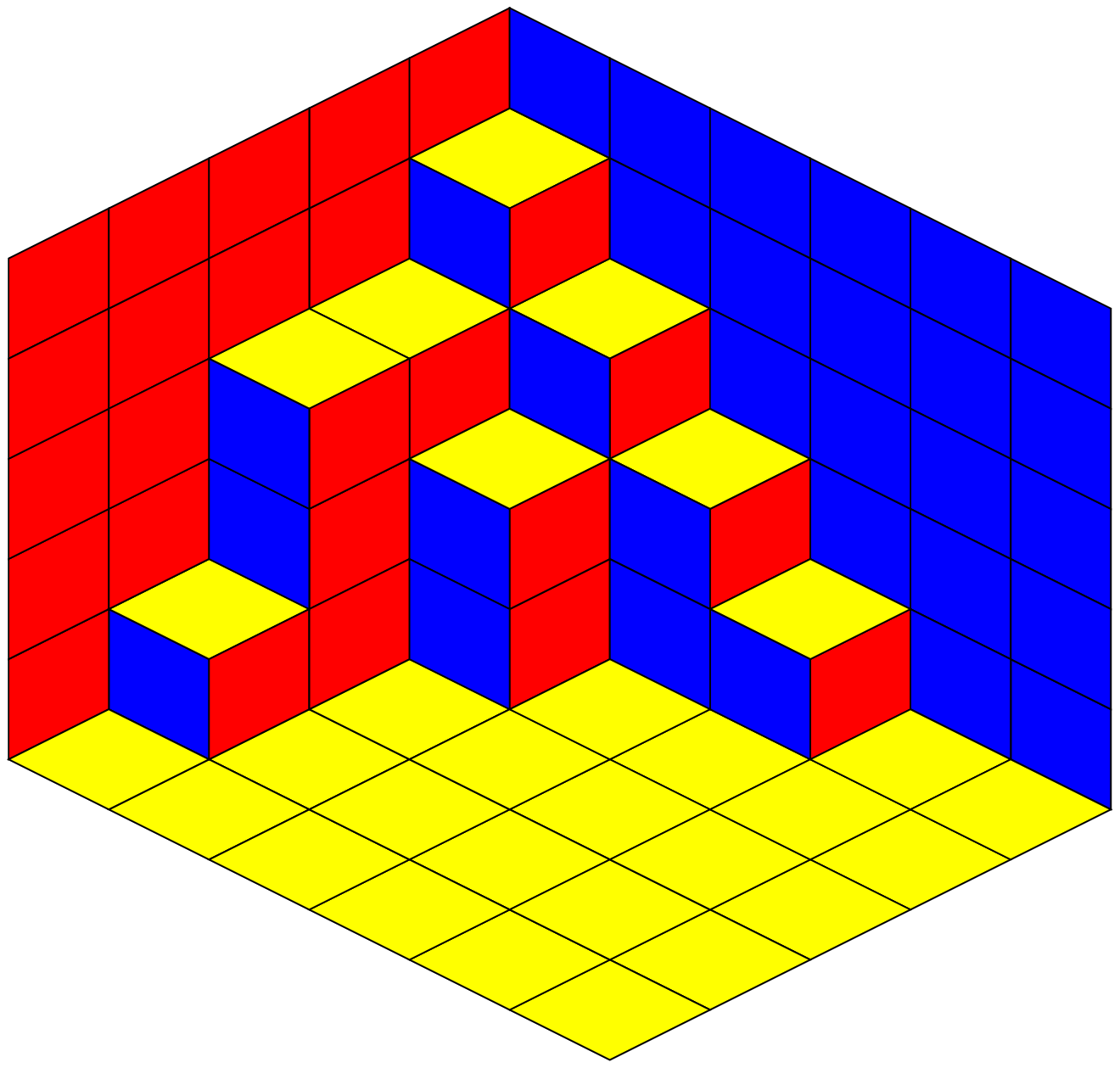}}%
\hfill%
{\includegraphics[height=4cm]{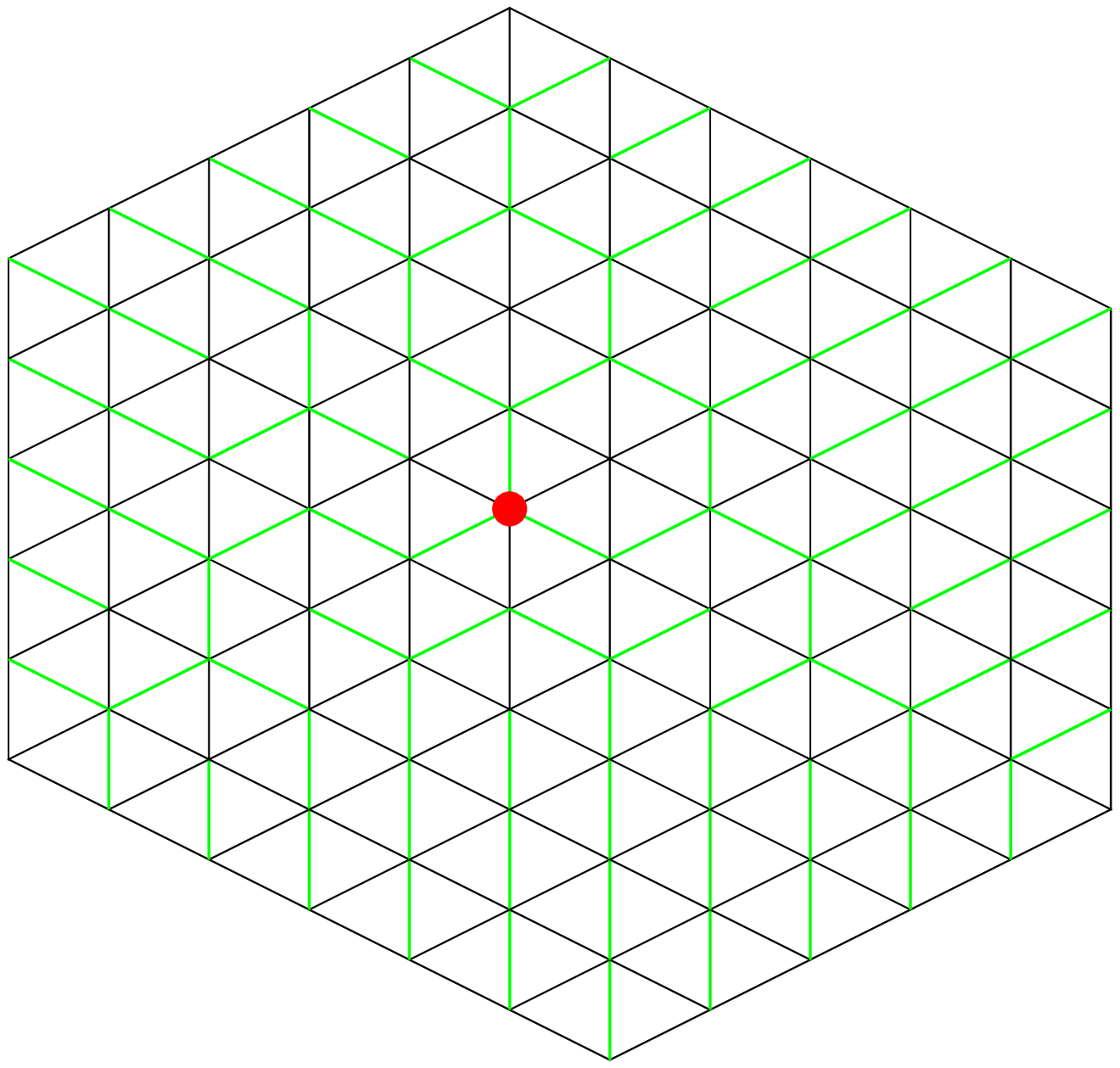}}%
\hfill{}
\caption{The right picture is obtained from the left 
by drawing the shortest diagonals of the small parallelograms in green.}
\end{figure}

As we have seen in the last section, the $3$-dimensional Young
diagrams correspond to the ideals of the poset $\De_0$ in the case
of a quiver $Q$ with one node $0$, three arrows $x,y,z$ and potential
$W=xyz-zyx$. The quiver $Q$ is embedded in the torus and the corresponding 
periodic quiver $\pq$  is embedded in the plane 
and defines there precisely the plane tiling by triangles
discussed above. 

We have thus all ingredients to generalize the above correspondence 
to arbitrary brane tilings. On the one hand we have ideals in the
poset of equivalence classes of paths, and on the other hand
we have perfect matchings of the plane tiling defined by the 
periodic quiver. The goal of this section is to construct such
a correspondence.

So, let $(Q,W)$ be a quiver potential arising from a brane tiling
satisfying the conditions of the previous section.
We fix some $i_0\in\pq_0$. 
Let $\De=\De_{i_0}$ be the poset of equivalence classes of paths
in \pq starting at $i_0$ and let $\De'$ be the poset
of equivalence classes of weak paths starting at $i_0$.

\begin{rmr}
We can identify $\De$ with the set of equivalence classes 
of paths in \q starting at $\pi(i_0)$.
\end{rmr}

For any $i\in\pq_0$ and any subset $\Om\sb\De'$,
we define $\Om_i\sb \Om$ to be the set of all weak
paths ending at $i$.
For any point $i\in\pq_0$, we denote by $v_i\in\De_i$ the shortest
path between $i_0$ and $i$ (see Remark \ref{rem:description of De}).
Then  
any weak path in $\De'_i$ is equivalent 
to $\om^kv_i$ for some $k\in\cZ$.
Therefore we can identify $\De'$ with $\pq_0\xx\cZ$ and
identify $\De$ with $\pq_0\xx\cN$.
For any ideal $\Om\sb\De$, we define an ideal 
$\Om':=\Om\cup(\pq_0\xx\cZ_{<0})$ in $\De'$.

\begin{prp}
For any finite ideal $\Om\sb\De$ the set
$$I(\Om):=\sets{a:i\arr j}{\exists u\in\Om'_i,au\not\in\Om'_j}\sb\pq_1$$
is a perfect matching of the plane tiling defined by \pq 
(i.e.\ of the dual bipartite graph).
\end{prp}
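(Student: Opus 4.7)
The plan is to encode the subset $\Om'\subset\De'$ by a height function on $\pq_0$, to measure the $\om$-winding of each arrow of $\pq$ by a nonnegative integer, and then to exploit the fact that both data are additive along cycles, while the total winding around every face is exactly one. Using the identifications $\De\iso\pq_0\xx\cN$ and $\De'\iso\pq_0\xx\cZ$ from Remark \ref{rem:description of De}, in which $(i,k)$ corresponds to $v_i\om^k$, I would first attach to $\Om$ the height $h_i:=\#(\Om\cap(\set{i}\xx\cN))\in\cN$, so that $\Om'\cap(\set{i}\xx\cZ)=\set{i}\xx\cZ_{<h_i}$; finiteness of $\Om$ gives $h_i=0$ for almost all $i\in\pq_0$. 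For each arrow $a:i\to j$ of $\pq$, I would define $m(a)\in\cN$ by the weak equivalence $av_i\sim v_j\om^{m(a)}$; this is well defined and nonnegative because $av_i$ is a strict path from $i_0$ to $j$, and it is clearly additive under composition.

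The argument then rests on two computations. First, for every face $F$ of $\pq$ one has $\sum_{a\in\dd F}m(a)=1$. Indeed, traversing $\dd F$ in the direction of its arrows gives a cycle at any of its vertices that is weakly equivalent to $\om$ by the very definition of $\om_i$ recalled in Section \ref{sec:groupoid}, so additivity of $m$ yields $\sum m(a)=m(\om)=1$; since each $m(a)\ge 0$, exactly one boundary arrow of $F$ carries $m=1$ and the rest carry $m=0$. Second, for every arrow $a:i\to j$ one has $h_j\le h_i+m(a)$, with $a\in I(\Om)$ if and only if this inequality is strict. The bound $h_j\le h_i+m(a)$ comes from the ideal property of $\Om$: the relation $(i,\ell-m(a))\le(j,\ell)$ in $\De$ (witnessed by precomposition with $a$) together with $(j,h_j-1)\in\Om'$ forces $(i,h_j-1-m(a))\in\Om'$ (trivially in the negative range, by the ideal property in the nonnegative range), and hence $h_j-1-m(a)<h_i$.

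A direct unwinding of the definition of $I(\Om)$ further shows that the existence of some $u\in\Om'_i$ with $au\not\in\Om'_j$ is equivalent to the existence of an integer $k$ with $k<h_i$ and $k+m(a)\ge h_j$, i.e.\ to the strict inequality $h_j<h_i+m(a)$. To conclude, set $\eps_a:=m(a)-(h_{t(a)}-h_{s(a)})\in\cN$ and sum around the boundary of any face $F$: the height contributions telescope to zero along the cycle, so $\sum_{a\in\dd F}\eps_a=\sum_{a\in\dd F}m(a)=1$. Thus exactly one of the nonnegative integers $\eps_a$ is strictly positive, and by the characterization above this is the unique boundary arrow of $F$ belonging to $I(\Om)$, which is precisely the perfect matching condition for the dual bipartite graph.

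The main obstacle is the identity $\sum_{a\in\dd F}m(a)=1$: it records the geometric fact that the boundary of a face has winding number one with respect to $\om$, and its rigorous proof depends essentially on Proposition \ref{prp:weak equivalence} and the analysis of weak equivalence in Section \ref{sec:groupoid} (additivity of $m$ and the cycle equivalence $a_r\cdots a_1\sim\om_{s(a_1)}$). Everything else in the plan is a bookkeeping exercise in combining the height function with this identity.
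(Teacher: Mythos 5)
Your proof is correct, but it reaches the conclusion by a different mechanism than the paper. The paper works directly with the order on $\De'$: for existence of a matched arrow on a face $F$ it observes that if no boundary arrow of $F$ lay in $I(\Om)$, one could run around $F$ indefinitely, producing $\om^k u\in\Om'_i$ for all $k\ge0$ and contradicting finiteness of $\Om$; for uniqueness it assumes two arrows $a_1,a_2\in F\cap I(\Om)$ and, using the ideal property and the face relation, derives $\om u_1\le u_1$, a contradiction. You instead set up a quantitative bookkeeping: the fibrewise height $h_i=\#\Om_i$ with $\Om'_i=\cZ_{<h_i}$, the winding number $m(a)$ defined by $av_i\sim v_j\om^{m(a)}$, the inequality $h_{t(a)}\le h_{s(a)}+m(a)$ with strictness exactly when $a\in I(\Om)$, and the face identity $\sum_{a\in F}m(a)=1$ (which, as you say, rests on the commutation of \om with paths, on Remark \ref{rem:description of De}, and on distinct powers of \om giving distinct classes, i.e.\ Lemma \ref{lmm:free group}); summing the nonnegative integers $\eps_a=m(a)-h_{t(a)}+h_{s(a)}$ around a face then gives exactly one arrow of $I(\Om)$ per face, so existence and uniqueness come out of a single counting identity rather than two separate contradiction arguments. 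Your construction in fact anticipates the height-function formalism the paper only introduces after the subsequent theorem: your $h$ is $h_{I(\Om)}$, your $\eps_a$ is $\hi_{I(\Om)}(a)$, and (given the face identity, which forces $m(a)\in\{0,1\}$) $m(a)=\hi_{I_0}(a)$, so your characterization of $I(\Om)$ is the relation $h_I(i)-h_I(j)=\hi_I(a)-\hi_{I_0}(a)$ in disguise — a useful byproduct — while the paper's argument is shorter and purely order-theoretic. The one step you should make explicit is that each fibre $\Om_i$ is a down-set of $\set{i}\xx\cN$ (so that indeed $\Om'_i=\cZ_{<h_i}$); this holds because strict cycles at $i$ have nonnegative winding, by the same appeal to Remark \ref{rem:description of De} that you use to show $m(a)\ge0$.
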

\begin{proof}
Let $F$ be some face in \pq.
We will show that some of its arrows are contained in $I(\Om)$.
If not, then for any arrow $a:i\arr j$ and any path $u\in\Om'_i$,
we have $au\in\Om'_j$. Going along the face $F$, we get $\om u\in\Om'_i$
and therefore $\om^ku\in\Om'_i$ for any $k\ge0$. It follows
that $\Om_i$ is infinite.

Assume that there are two arrows $a_1,a_2\in F$ contained in $I(\Om)$.
Let $w_2a_2w_1a_1$ be the cycle along the face $F$. Let $u_i\in\Om'_{s(a_i)}$
be such that $a_iu_i\not\in\Om'_{t(a_i)}$, $i=1,2$.
Then $w_1a_1u_1\not\in\Om'_{s(a_2)}$. 
This implies that $\om u_2\le w_1a_1u_1$ and $w_2a_2u_2\le u_1$.
Analogously $\om u_1\le w_2a_2u_2$.
This means that $\om u_1\le u_1$, a contradiction. 
\end{proof}

We call the perfect matching $I_0=I(\emptyset)$ the canonical perfect matching.
We have
$$I_0=\sets{a:i\arr j}{av_i>v_j}.$$
A perfect matching $I$ will be called congruent to $I_0$ if
the symmetric difference $(I\ms I_0)\cup(I_0\ms I)$ is finite. 
It is clear that any $I(\Om)$ is congruent to $I_0$.

The following condition will be called the second consistency condition
and will be assumed throughout the paper.

\begin{condition}\label{cond:C}
For any two vertices $i,j\in\pq_0$, there exists an arrow $a:j\arr k$
such that $av_{ij}$ is the shortest path between $i$ and $k$
(here $v_{ij}$ is the shortest paths between $i$ and $j$,
see Remark \ref{rem:description of De}).
Also the dual condition is satisfied.
\end{condition}

\begin{rmr}
It follows from the results of \cite{Broomhead1} that the above
condition is satisfied if there exists an $R$-charge
on the brane tiling (see Remark \ref{rmr:R-charge}). We thank
Alastair King for this remark.
\end{rmr}

\begin{rmr}
To verify the above condition we have to test just a finite number
of shortest paths. This can be seen as follows. 
One can show that a path $v$ in \pq is the shortest path if and only
if there exists some perfect matching of the bipartite graph $G$
that does not intersect $\pi(v)$. 
If the above condition is not satisfied then we can find some path
$v$ in $Q$ that does not intersect some perfect matching
and such that for any arrow $a$ starting at $t(v)$ the path $av$ intersects
every perfect matching. If the number of cycles in $v$ is greater than
the number of perfect matchings of $G$, we can subtract some
cycle from $v$ so that the new path $v'$ intersects the the same set of
perfect matchings. It follows that already the path $v'$ contradicts
the above condition. Therefore, we can assume that the number of 
cycles in the considered paths on $Q$ is bounded by the number of perfect matchings.
Such paths form a finite set.
\end{rmr}

In the next theorem we will actually need the above condition just for $i=i_0$.
We will see in the next section that the same condition is needed
in order to ensure that the the quiver potential algebra $\cC Q/(\dd W)$ is a
3-Calabi-Yau algebra.

\begin{thr}
For any perfect matching $I$ congruent to $I_0$, there exists
a unique finite ideal \Om of \De such that $I(\Om)=I$.
This ideal is defined as $\Om'\cap\De$, where $\Om'$ is the minimal
ideal in $\De'$, containing $\pq_0\xx\cZ_{<0}$ and such that
for any arrow $a\not\in I$ and any $u\in\Om'_{s(a)}$, we have
$au\in\Om'_{t(a)}$. 
\end{thr}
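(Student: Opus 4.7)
The plan is to construct $\Omega'$ explicitly as an increasing union and then verify in turn that $\Omega := \Omega' \cap \De$ is finite, that $I(\Omega) = I$, and that $\Omega$ is unique. First I would build $\Omega'$ by setting $\Omega'_0 := \pq_0 \xx \cZ_{<0}$ and, recursively, letting $\Omega'_{n+1}$ be the downward closure of $\Omega'_n \cup \set{au : a \notin I,\ u \in (\Omega'_n)_{s(a)}}$, then putting $\Omega' := \bigcup_n \Omega'_n$. By construction $\Omega'$ is an ideal satisfying the closure property in the statement, and any ideal with that property contains each $\Omega'_n$ inductively, so it is the minimal one.

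The main obstacle is the finiteness claim for $\Omega$. The key input is that outside the finite set $I \triangle I_0$, the arrows not in $I$ coincide with arrows not in $I_0$, and these are exactly the shortest-path-preserving ones: $av_i \sim v_j$ whenever $a:i\arr j$ is not in $I_0$. Applying such an arrow to $u = \om^k v_i$ preserves the height $k$, so height growth comes only from the finitely many arrows in $I_0 \ms I$. To make this quantitative I would introduce a height function $\phi:\pq_0 \arr \cZ$ associated to the difference $\chi_I - \chi_{I_0}$ (a $1$-cocycle on $\pq_1$ that is exact because both matchings have content $1$ around every face) and prove by induction on $n$ that $(i,k) \in \Omega'_n$ implies $k < \phi(i)$. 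Since $\phi$ has finite support, only finitely many $i \in \pq_0$ admit $\phi(i) > 0$, and at each such vertex the height is bounded, yielding $|\Omega| < \infty$.

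With finiteness established, the equality $I(\Omega) = I$ follows from a two-sided comparison. The inclusion $I(\Omega) \sbe I$ is immediate: if $a \notin I$ and $u \in \Omega'_{s(a)}$, then $au \in \Omega'_{t(a)}$ by the closure property, so $a \notin I(\Omega)$. For the reverse inclusion the preceding Proposition asserts that $I(\Omega)$ is itself a perfect matching of the plane tiling; since each face of $\pq$ contains exactly one arrow of $I$ and exactly one arrow of $I(\Omega)$, the already-established inclusion $I(\Omega) \sbe I$ forces equality.

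Uniqueness is the last step. Given any finite ideal $\tl\Omega$ with $I(\tl\Omega) = I$, the corresponding $\tl\Omega'$ contains $\pq_0 \xx \cZ_{<0}$ and is closed under arrows not in $I$, so minimality gives $\Omega' \sbe \tl\Omega'$. For the reverse inclusion, I would argue that an element $u \in \tl\Omega \ms \Omega$ minimal in the poset admits a decomposition $u = au'$ with $u' \in \Omega$ and $a \notin I$: using the dual of Consistency Condition \ref{cond:C} one produces an incoming arrow to $t(u)$ along which a shortest representative of $u$ splits, and the perfect-matching structure of $I$ at the face containing this arrow forces $a \notin I$. The closure property then gives $u \in \Omega'$, contradicting $u \notin \Omega$.
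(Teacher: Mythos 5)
Your overall skeleton (build the minimal ideal recursively, bound it by a height function, identify $I(\Om)=I$, then prove uniqueness) matches the paper's, and your middle step is actually a nice shortcut: once $\Om$ is finite, $I(\Om)\sbe I$ plus the fact that both are perfect matchings (one arrow per face, by the preceding Proposition) forces equality, whereas the paper constructs an explicit witness path using a maximal element of $\Om'_j$. However, the finiteness step has a genuine gap. Your inductive bound ``$(i,k)\in\Om'_n$ implies $k<\phi(i)$'' does not get off the ground: already at the base case $\pq_0\xx\cZ_{<0}$ (equivalently, at the first application of an arrow $a\in I_0\ms I$ to an element of height $-1$) it requires $\phi\ge 0$, i.e.\ nonnegativity of the height function $h_I$ relative to the canonical matching. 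That nonnegativity is precisely the nontrivial geometric content here, and it is exactly where Condition \ref{cond:C} (for $i=i_0$) must enter: the paper proves it by analyzing the alternating cycles (``traps'') of the symmetric difference of $I$ and $I_0$, showing via Condition \ref{cond:C} that every trap encloses $i_0$ and that its $I_0$-arrows point inward, so that $h_I(i)$ equals the number of traps containing $i$. Your argument never proves $\phi\ge 0$ and never invokes Condition \ref{cond:C} in this step, so the finiteness claim is unjustified as written (one can derive $\phi\ge0$ from Condition \ref{cond:C} by extending shortest paths indefinitely, but this has to be done). A smaller omission in the same induction: you implicitly need that an arrow $a\in I_0$ raises the height by exactly one, i.e.\ $av_{s(a)}\sim\om\, v_{t(a)}$, which requires the easy but necessary observation that prefixes of shortest paths are shortest, equivalently $\chi_{I_0}(v_j)=0$.

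The uniqueness step also has a gap. Minimality of $\Om'$ does give $\Om\sbe\wtl\Om$, but your key claim --- that a minimal element $u\in\wtl\Om\ms\Om$ can be written $u=au'$ with $a\notin I$ --- is not established. The dual consistency condition produces some arrow along which a representative splits, but nothing in your argument forces that arrow to avoid $I$: the unique $I$-arrow of the face containing it may well be that arrow itself. For a height-zero $u$ the splitting arrow must in addition avoid $I_0$, and if the minimal element is the trivial path (which happens exactly when $\Om=\emptyset$, e.g.\ $I=I_0$, and $\wtl\Om\ne\emptyset$) no decomposition exists at all, so the argument breaks outright there. The paper proceeds differently: it considers $A=\sets{i\in\pq_0}{\wtl\Om_i\ne\Om_i}$, shows $A$ is stable under crossing any arrow not in $I$ in both directions --- the backward direction using the trap/height description of $\Om$ obtained in the finiteness step --- and then uses that $\pq_1\ms I$ connects all vertices of \pq to conclude $A=\pq_0$, contradicting finiteness of $\wtl\Om$. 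You would need either that argument or a genuine proof of your decomposition claim, including the trivial-path case.
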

\begin{proof}
Let $\Om'$ be the ideal in $\De'$ described above and let $\Om=\Om'\cap\De$.
Let us show that $\Om$ is finite.
The union $I\cap I_0$ considered on the bipartite graph $\wtl\bg$
consists of coinciding arrows and a finite number of cycles.
On the periodic quiver \pq these cycles go through a finite
number of faces and intersect them alternatingly in arrows
from $I$ and $I_0$. 
We will call such a cycle in \pq a trap. Every trap is 
determined by a sequence $S$ of arrows in $I$ and $I_0$ that
it intersects.

We claim that every trap contains $i_0$. 
Assume that $i_0$ is outside of a trap.
Let $au\in\De$ be one of the shortest paths among all paths to the points
inside the trap. Then $s(a)$ is outside of the trap and $a\in S$. 
We have $v_{t(a)}=av_{s(a)}$, so $a\not\in I_0$.  
This implies that all arrows from $S\cap I_0$ point outside the trap
and all arrows from $S\cap I$ point inside the trap.
It follows that there exists some point $i$ in the trap
such that for any arrow $a:i\arr j$, we have $av_i>v_j$, which
contradicts our assumption. Indeed, if this is not the case
then using the fact that the number of points in the trap
is finite, we can find a cycle $a_r\dots a_1$ such that
$a_iv_{s(a_i)}\sim v_{t(a_i)}$ for all $i$ and therefore 
$a_r\dots a_1v_{s(a_1)}\sim v_{s(a_1)}$, a contradiction.
So, the point $i_0$ is in the trap. 

We claim that all
arrows from $S\cap I_0$ point inside the trap. Let
$au\in\De$ be one of the shortest paths among all paths to the 
points outside the trap. Then $av_{s(a)}=v_{t(a)}$,
so $a\not\in I_0$. It follows that $a\in I$ and all arrows 
from $S\cap I$ point outside the trap.

Starting with some path $u=(i,-1)\in \Om'$, we apply to it all
compatible arrows from $\pq_1\ms I$. 
It follows that either we stay in the same
plane, or we apply some arrow from $I_0\ms I$ increasing the
plane number by one and get automatically to some trap. 
Applying again the arrows from $\pq_1\ms I$,
we cannot get outside of the trap (all arrows pointing outside
the trap are contained in $I$). The only possibility to get
outside of the trap is to use the property that $\Om'$ is an ideal.
So, we can move outside of the trap along (the inverse of) some arrow from $I_0$,
but then the plane number will decrease by one.

We get an alternative description of $\Om$. 
The point $(i,k)\in\pq_0\xx\cN$ is contained in \Om if and only
if $i$ is contained in more than $k$ traps. From this description,
it follows that \Om is finite. 

Let us show that $I(\Om)=I$. We have to check two properties:
\begin{enumerate}
	\item If $a:i\arr j$ is in $I$ then there exists $u\in\Om'_i$ such that $au\not\in\Om'_j$.
	\item If $a:i\arr j$ is not in $I$ then for all $u\in\Om'_i$, we have $au\in\Om'_j$.
\end{enumerate}
The second property follows from our construction.
Consider some $a:i\arr j$ in $I$. Let $aw$ be a cycle along the face
in \pq. Then no arrow from $w$ is contained in $I$.
It follows that for any $u\in\Om'_j$, we have $wu\in\Om'_i$.
Let $u=(j,k)\in\Om'_j$ be the path with the maximal $k\ge-1$.
Then $wu\in\Om'_i$, but $awu=(j,k+1)\not\in\Om'_j$. 
This proves the first property.

Let us show that \Om is uniquely determined. Assume that there
exists some other finite ideal $\wtl\Om\sp\Om$ with $I(\wtl\Om)=I$.
Let 
$$A=\sets{i\in\pq_0}{\mbox{there exists } u\in\wtl\Om_i\ms\Om_i}.$$
We claim that for any arrow $a:i\arr j$ not in $I$, 
we have $i\in A$ if and only if $j\in A$.
If $i\in A$ then there exists some $u\in\wtl\Om_i\ms\Om_i$.
It follows that $au\in\wtl\Om_j\ms\Om_j$ and $j\in A$.
Conversely, assume that $j\in A$, so there exists some $k\ge0$
with $(j,k)\in\wtl\Om_j\ms\Om_j$.
If $a\not\in I_0$ then $a(i,k)=(j,k)$ and 
$(i,k)\in\wtl\Om_i\ms\Om_i$, so $i\in A$.
If $a\in I_0$ then $(j,k)$ is in some trap,
so $(j,0)\in\Om_j$ and therefore $k>0$. It follows
that $(i,k-1)=a\inv(j,k)\in\wtl\Om_i\ms\Om_i$
and $i\in A$. 

The set $\pq_1\ms I$ connects all the nodes of \pq,
so $A=\pq_0$. This contradicts the assumption
that $\wtl\Om$ is finite.
\end{proof}

For any perfect matching $I$ of \pq, consider its characteristic
function $\hi_I:\pq_1\arr\cZ$. If $I$ is congruent to $I_0$
then there exists a unique function $h_I:\pq_0\arr\cZ$,
called the height function, such that for every arrow $a:i\arr j$, we have
$$h_I(i)-h_I(j)=\hi_I(a)-\hi_{I_0}(a),$$
and $h_I(i)=0$ for $i$ far enough from $i_0$.
It is clear that $h_I(i)$ equals the number of traps (introduced in the theorem)
containing $i$.
In the course of the proof of the theorem, we have found an alternative
description of the finite ideal \Om satisfying $I(\Om)=I$. Namely,
$$\Om=\sets{(i,k)}{0\le k\le h_I(i)-1}.$$

\begin{crl}
We have
$$Z^{i_0}(A)=\sum_{I-\text{perf.mat.}}\prod_{i\in\pq_0}x_{\pi(i)}^{h_I(i)}.$$
\end{crl}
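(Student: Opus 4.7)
The plan is to combine three ingredients already established in the paper: the enumeration of $Z^{i_0}(A)$ by finite ideals in $\De_{i_0}$ given in Proposition \ref{prp:formula1}, the bijection with perfect matchings congruent to $I_0$ from the preceding theorem, and the explicit description of the ideal in terms of the height function $h_I$. The strategy is simply to re-index the sum over finite ideals as a sum over perfect matchings and to convert the product over paths into a product over vertices of \pq.

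First I would recall from Proposition \ref{prp:formula1} that
\[
Z^{i_0}(A)=\sum_{\Om\sb\De_{i_0}\text{ fin.\ ideal}}\prod_{u\in\Om}x_{t(u)}.
\]
Next, using the bijection $\Om\leftrightarrow I(\Om)$ between finite ideals of $\De_{i_0}$ and perfect matchings $I$ of \pq congruent to $I_0$ established in the theorem, I would rewrite the outer sum as a sum over such perfect matchings $I$, with $\Om$ the unique finite ideal satisfying $I(\Om)=I$.

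Then I would use the identification of $\De_{i_0}$ with $\pq_0\xx\cN$ (Remark \ref{rem:description of De}) together with the explicit description
\[
\Om=\sets{(i,k)}{0\le k\le h_I(i)-1}
\]
that was extracted at the end of the proof of the theorem. Since a path $u\in\De_{i_0}$ corresponding to $(i,k)\in\pq_0\xx\cN$ ends at the vertex $\pi(i)\in\q_0$, every one of the $h_I(i)$ elements $(i,0),\dots,(i,h_I(i)-1)$ of $\Om$ contributes a factor $x_{\pi(i)}$ to $\prod_{u\in\Om}x_{t(u)}$. Grouping by $i\in\pq_0$ yields
\[
\prod_{u\in\Om}x_{t(u)}=\prod_{i\in\pq_0}x_{\pi(i)}^{h_I(i)},
\]
and substituting back produces the claimed formula.

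There is essentially no obstacle here beyond bookkeeping: the corollary is a direct reformulation. The only subtle point worth mentioning in the write-up is that the product $\prod_{i\in\pq_0}x_{\pi(i)}^{h_I(i)}$ is well-defined because $h_I$ has finite support (it vanishes outside a finite neighborhood of $i_0$, as noted in the definition of the height function), matching the finiteness of $\Om$.
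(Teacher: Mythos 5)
Your proof is correct and follows exactly the route the paper intends: the corollary is an immediate reindexing of Proposition \ref{prp:formula1} via the bijection of the preceding theorem together with the description $\Om=\sets{(i,k)}{0\le k\le h_I(i)-1}$ stated just before the corollary. Your added remarks on the endpoint $t(u)=\pi(i)$ and on the finite support of $h_I$ (equivalently, the restriction to matchings congruent to $I_0$) are exactly the right bookkeeping.
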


\section{Calabi-Yau property}\label{sec:CY}
Let us study condition \ref{cond:C} in more detail.
We use the dual formulation.

\begin{lmm}\label{lmm:cond:C}
The following conditions are equivalent
\begin{enumerate}
	\item If $u$ is the shortest path in \pq (between its endpoints), then
	there exists some arrow $a$ with $t(a)=s(u)$ such that $ua$
	is also the shortest path.
	\item If $w$ is a weak path in \pq (or in \q) such that $wa$ 
	is equivalent to a strict path for any
	arrow $a$ with $t(a)=s(w)$ then $w$ is equivalent to a strict path.
\end{enumerate}
\end{lmm}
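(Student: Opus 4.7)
The plan is to exploit the parametrization from Remark \ref{rem:description of De}: for fixed $i,j \in \pq_0$, every weak path $w$ from $i$ to $j$ is weakly equivalent to $v_{ij}\om^k$ for a unique $k \in \cZ$, and $w$ is equivalent to a strict path if and only if $k \geq 0$. Both implications will then reduce to manipulating the $\om$-exponent, using the fact that $\om$ commutes with paths in $\fg$ (the lemma giving $u\om_i \sim \om_j u$ for any weak path $u: i \to j$).

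For $(1) \Rightarrow (2)$, I would argue by contraposition. Suppose $w$ is a weak path from $i$ to $j$ not equivalent to any strict path, so $w \sim v_{ij}\om^{-m}$ for some $m \geq 1$. Condition $(1)$ applied to the shortest path $v_{ij}$ produces an arrow $a$ with $t(a)=i$, say $a: k \to i$, such that $v_{ij}a \sim v_{kj}$ (i.e.\ $v_{ij}a$ is the shortest path between its endpoints). Using commutativity of $\om$,
\[
wa \;\sim\; v_{ij}\om^{-m}\,a \;\sim\; (v_{ij}a)\,\om^{-m} \;\sim\; v_{kj}\om^{-m},
\]
whose $\om$-exponent is $-m < 0$, so $wa$ is not equivalent to a strict path, contradicting the hypothesis of $(2)$.

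For $(2) \Rightarrow (1)$, again by contraposition. Given a shortest path $u = v_{ij}$, I would introduce the weak path $w := u\om^{-1}$; it has $\om$-exponent $-1$, hence is not equivalent to a strict path. The contrapositive of $(2)$ then yields an arrow $a: k \to i$ for which $wa$ is not equivalent to a strict path. Writing $ua \sim v_{kj}\om^p$ with the unique $p \geq 0$, I would compute
\[
wa \;\sim\; (ua)\,\om^{-1} \;\sim\; v_{kj}\om^{p-1},
\]
which is equivalent to a strict path iff $p \geq 1$. The failure therefore forces $p = 0$, that is $ua \sim v_{kj}$, which is exactly condition $(1)$.

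The main obstacle will really just be bookkeeping: one has to consistently translate \emph{equivalent to a strict path} into non-negativity of the $\om$-exponent, keep track of composition order, and use the commutation rule $u\om_i \sim \om_j u$ to slide the $\om^{\pm 1}$ factors past strict paths. Once this dictionary is set up, the equivalence in each direction is essentially a one-line manipulation, and no further ingredients (such as the full strength of Condition \ref{cond:B} or perfect-matching arguments) are needed.
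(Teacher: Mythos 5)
Your proof is correct and follows essentially the same route as the paper's: both decompose a weak path as (shortest path)$\cdot\om^k$ via Remark \ref{rem:description of De}, translate ``equivalent to a strict path'' into $k\ge 0$, and slide the $\om^{\pm1}$ factors past strict paths using the commutation $u\om_i\sim\om_j u$. The only difference is presentational (you argue both implications by contraposition, the paper by direct contradiction), so no substantive comparison is needed.
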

\begin{proof}
Assume that the first condition is satisfied and let $w$ be as in the second condition.
We can write $w=\om^k u$, where $u$ is the shortest path. To show that
$w$ is equivalent to a strict path, we have to prove that $k\ge0$.
Assume that $k<0$. By the first condition there exists some arrow $a$
with $t(a)=s(u)$ such that $ua$ is the shortest path. But this
implies that $wa=\om^kua$ is not the strict path, as $k<0$. This
contradicts our assumption on $w$.

Assume that the second condition is true and let $u$ be as in the first condition.
We consider the weak path $w=\om\inv u$. If $ua$ is not the shortest path for some 
arrow $a$, then $\om\inv ua=wa$ is equivalent to a strict path. It follows from the second
condition that $w=\om\inv u$ is equivalent to a strict path, and therefore $u$
is not the shortest one, a contradiction.
\end{proof}

For any \La-graded $A$-module $M$ and for any $\la\in\La$, we denote
by $M[\la]$ a new \La-graded $A$-module with $M[\la]_\mu=M_{\mu+\la}$.
For any point $i\in Q_0$ we denote by $S_i$ the one-dimensional 
$A$-module concentrated at $i$. We endow $S_i$ with a \La-grading 
so that all its elements have degree zero.

\begin{prp}\label{prp:resolution}
Assume that all consistency conditions are satisfied.
Then for any $i\in Q_0$ there exists an exact sequence of \La-graded $A$-modules
$$0\arr P_i[-\ub\om]
\arr^{\cdot b}\bigoplus_{b:k\arr i}P_k[b-\ub\om]
\arr^{\cdot\frac{\dd W}{\dd b}a\inv}\bigoplus_{a:i\arr j}P_j[-a]
\arr^{\cdot a}P_i\arr S_i\arr 0.$$
\end{prp}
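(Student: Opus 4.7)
My plan is to use the $\La$-grading of $A$ to reduce both the complex property and exactness to a finite combinatorial check in each $\La$-degree. By Condition \ref{cond:B} every graded piece $(P_j)_\mu$ has dimension at most one, so the $\la$-component of the sequence is a short complex of vector spaces whose entries correspond to identifiable points of the periodic quiver $\pq$: fixing a lift $\tilde i\in\pq_0$ of $i$, Remark \ref{rem:description of De} identifies the relevant weights with pairs $(q,\ell)\in\pq_0\xx\cZ$, and $(P_i)_\la$ is nonzero precisely for $\ell\ge 0$.

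The complex property will follow from three direct computations. The composition $P_j[-a]\to P_i\to S_i$ lands in the augmentation ideal, hence vanishes. On the $b$-summand, the composition $\bigoplus_b P_k[b-\ub\om]\to\bigoplus_a P_j[-a]\to P_i$ is right multiplication by $\dd W/\dd b$, which is zero in $A$. For the leftmost composition I intend to invoke the cyclic identity
$$\sum_{b\colon k\to i} b\cdot\frac{\dd W}{\dd b}\;=\;\sum_{a\colon i\to j}\frac{\dd W}{\dd a}\cdot a$$
in $\cC\q$ (both sides enumerate cyclic rotations of the face cycles passing through $i$); comparing $a$-components shows that the $a$-th component of the composition equals $\dd W/\dd a$, again zero in $A$.

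For exactness I fix $\la\in\La$ and compare the complex to a local picture at $(q,\ell)$ on $\pq$. Condition \ref{cond:C} and its dual guarantee that the arrows in and out of every point of $\pq$ realise the shortest-path structure, which is exactly what is needed for the dimensions of the four projective terms in each $\la$-degree to balance. Exactness at $P_i$ is immediate from the augmentation, and exactness at $\bigoplus_a P_j[-a]$ follows because any homogeneous identity $\sum_a x_a a=0$ in $A$ can, by Proposition \ref{prp:weak equivalence} and Condition \ref{cond:C}, be rewritten as a finite sequence of face-cycle substitutions, each an instance of some $\dd W/\dd b$. Injectivity at the left end reduces to the non-degeneracy of $\bg$ together with the $\La$-grading.

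The main obstacle will be exactness at $\bigoplus_b P_k[b-\ub\om]$: matching the ``relations among the relations $\dd W/\dd b$'' with the image of $\cdot b\colon P_i[-\ub\om]\to\bigoplus_b P_k[b-\ub\om]$. Given a $\La$-homogeneous tuple $(y_b)$ with $\sum_b y_b(\dd W/\dd b)=0$ in $A$, I must produce $z\in P_i[-\ub\om]$ with $y_b=z\cdot b$ for every incoming arrow $b$. By the previous step, such a relation descends from a polygonal identity of paths in $\pq$ obtained by pasting together face cycles incident to the lift of $i$; the planarity of $\pq$ forces this polygon to bound a finite disk, and Condition \ref{cond:C} applied pointwise on this disk provides the coherent $z$ that realises the bounding. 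This is the technical heart of the argument and is the step where the combinatorial geometry of the brane tiling and both consistency conditions enter decisively.
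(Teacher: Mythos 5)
The step where your argument actually has to do work --- exactness at $\bigoplus_{b:k\arr i}P_k[b-\ub\om]$ --- is set up incorrectly and then only gestured at. First, the set-up: the middle map does not send $(y_b)_b$ to $\sum_b y_b\,\dd W/\dd b$. Its component from the $b$-summand to the $a$-summand is right multiplication by the path $v$ for which $abv$ is (up to sign) the term of $W$ containing both $a$ and $b$; so membership in the kernel is one identity in $P_j$ for each outgoing arrow $a:i\arr j$, each involving only the two incoming arrows sharing a face with $a$. The condition you wrote, $\sum_b y_b(\dd W/\dd b)=0$ in $A$, is vacuous --- every $\dd W/\dd b$ is already zero in $A$ --- so with that description of the kernel the exactness you set out to prove would simply be false, and the reduction cannot start.

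Second, even with the correct kernel description, the decisive mechanism is missing from your outline. The paper writes a homogeneous kernel element in the path basis as $\sum x_{b,u}\,u\ts b^*$; vanishing of the $a$-component (with $abv$, $acw$ the two terms of $W$ containing $a$) forces $x_{c,u'}=x_{b,u}$ for some $u'$ with $ub\inv\sim u'c\inv$, and since this holds for every outgoing $a$, the weak path $\ga=ub\inv$ has the property that $\ga c$ is equivalent to a strict path for every arrow $c$ ending at $i$. Lemma \ref{lmm:cond:C}(2) --- the reformulation of Condition \ref{cond:C} as a weak-versus-strict path criterion --- then says $\ga$ itself is equivalent to a strict path $z\in P_i[-\ub\om]$, and $z\cdot b$ reproduces the given element. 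Your substitute, that planarity of \pq forces the relation to bound a finite disk and that Condition \ref{cond:C} ``applied pointwise'' then provides $z$, is not an argument: planarity holds for every brane tiling, consistent or not, so it cannot by itself produce the preimage; what is needed, and what you never isolate, is exactly the criterion of Lemma \ref{lmm:cond:C}. (A minor point: exactness of the right-hand three terms is a standard fact about presentations of quiver potential algebras --- the paper cites Bocklandt --- and needs neither Proposition \ref{prp:weak equivalence} nor Condition \ref{cond:C}; your remarks on the grading and on the complex property are fine.)
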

\begin{proof}
Exactness of 
$$\bigoplus_{b:k\arr i}P_k[b-\ub\om]
\arr^{\cdot\frac{\dd W}{\dd b}a\inv}\bigoplus_{a:i\arr j}P_j[-a]
\arr^{\cdot a}P_i\arr S_i\arr 0.$$
is known (see e.g. \cite{Bock1}).
We just have to prove the exactness in the term $\oplus P_k[b-\ub\om]$.
A basis of $P_k$ is given by the set $\De_k$ of equivalence classes 
of paths starting at $k$. So a general element of
$\bigoplus_{b:k\arr i}P_k[b-\ub\om]$
can be uniquely written in the form
$$f=\sum_{t(b)=i}\sum_{u\in\De_{s(b)}}x_{b,u}u\ts b^*,$$
where $x_{b,u}\in\cC$, and we use the symbol $b^*$ 
to denote the appropriate direct summand.
Let us analyze the condition that for $a:i\arr j$,
the image of $f$ in $P_j[-a]$ is zero.
Let the terms of $W$ that contain $a$ be $abv$ and
$acw$. Then we have 
$$\sum_{u\in\De_{s(b)}}x_{b,u}uv=\sum_{u'\in\De_{s(c)}}x_{c,u'}uw,$$ 
or, equivalently
$$\sum_{u\in\De_{s(b)}}x_{b,u}ub\inv=\sum_{u'\in\De_{s(c)}}x_{c,u'}u'c\inv.$$ 
It follows that if $x_{b,u}\ne0$ then $ub\inv=u'c\inv$ for some path 
$u'\in\De_{s(c)}$ and $x_{c,u'}=x_{b,u}$.
We have shown this for two arrows $b,c$ with $t(b)=t(c)=i$ that belong to 
adjacent faces. If $f$ is mapped to zero, then this is true
for two arbitrary arrows $b,c$ with $t(b)=t(c)=i$. If $x_{b,u}\ne0$,
then the weak path $\ga=ub\inv$ satisfies the condition that for any
arrow $c$ with $t(c)=s(\ga)=t(b)=i$, the weak path $\ga c$ is a strict path.
From Lemma \ref{lmm:cond:C} it follows that \ga is a strict path. 
Then $x_{b,u}\ga\in P_i[-\ub\om]$ is mapped to
$x_{b,u}u\ts b^*$ in $P_k[b-\om]$.
This shows the exactness of the sequence in the second term.
\end{proof}

\begin{thr}
Assume that all consistency conditions are satisfied.
Then the quiver potential algebra $\cC Q/(\dd W)$ is a 3-Calabi-Yau algebra.
\end{thr}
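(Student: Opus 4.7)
The plan is to upgrade the one-sided projective resolution from Proposition \ref{prp:resolution} to a self-dual projective bimodule resolution of $A$ of length $3$; this is precisely the definition of $A$ being a (graded) $3$-Calabi--Yau algebra.

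First I would write down the standard Ginzburg bimodule complex of $(Q, W)$:
$$0 \to (A\ts_S A)[-\ub\om] \xrightarrow{d_3} \bigoplus_{a\in\q_1}(A\ts_S A)[\wt(a)-\ub\om] \xrightarrow{d_2} \bigoplus_{a\in\q_1}(A\ts_S A)[-\wt(a)] \xrightarrow{d_1} A\ts_S A \xrightarrow{\mu} A \to 0,$$
where $S=\bigoplus_{i\in\q_0}\cC e_i$. The differential $d_1$ is the usual one sending the idempotent in the summand indexed by $a:i\to j$ to $a\ts e_i - e_j\ts a$; $d_2$ is built from the cyclic partial derivatives $\dd W/\dd a$; and $d_3$ is dual to $d_1$ in the cyclic sense. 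The Jacobian identities $\sum_a a\cdot(\dd W/\dd a)=\sum_a(\dd W/\dd a)\cdot a=0$ force this to be a complex, and a direct calculation exploiting the cyclic symmetry of $W$ exhibits an isomorphism with its $\Hom_{A^e}(-,A^e)$-dual complex, shifted by $[\ub\om]$ and placed in cohomological degree $3$. This built-in self-duality is the formal mechanism that will produce the Calabi--Yau property.

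Next, to prove exactness of the Ginzburg complex, I would apply $-\ts_A S_i$ to it for each $i\in\q_0$. A term-by-term inspection shows that the resulting complex of left $A$-modules coincides with the resolution of $S_i$ given by Proposition \ref{prp:resolution}, and is therefore exact. Because Section \ref{sec:brane} shows that the arrow weights generate a strictly convex cone in $\La_\cR$, the $\La$-graded algebra $A$ behaves like a positively graded algebra with finite-dimensional graded pieces (using Condition \ref{cond:B}); a graded Nakayama-type argument then promotes pointwise exactness after $-\ts_A S_i$ for every $i$ to exactness of the Ginzburg complex itself as a complex of $\La$-graded $A$-bimodules. Combined with the self-duality from the previous step, this yields the desired self-dual length-$3$ bimodule resolution, i.e.\ the $3$-Calabi--Yau property.

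The main obstacle I anticipate is the bookkeeping in the second step: identifying the specializations with Proposition \ref{prp:resolution} on the nose (including all the $\La$-shifts and signs), and setting up the graded Nakayama argument carefully enough that exactness at each simple really does transport to exactness of bimodules. The strong convexity of the weight cone, a consequence of the non-degeneracy of the brane tiling, is essential here; without it the $\La$-grading would fail to play the role of a positive grading and the induction would break down.
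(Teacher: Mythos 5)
Your proposal is correct in outline, but it takes a genuinely different route from the paper at the decisive step. Both arguments reduce, via Ginzburg's criterion, to proving exactness of the same bimodule complex, and both feed on Proposition \ref{prp:resolution}. The paper, however, never lifts exactness through a Nakayama-type argument: it first checks injectivity of the leftmost map $j$ by a direct computation with the path basis of $P_i$ (using that $x_u=bx_{ub}$ forces all coefficients to vanish), and then handles the remaining problematic spot --- exactness at $\bigoplus_{b:k\to i}Ae_k\ts e_iA$ --- by a purely numerical argument: the $\La$-graded character of the whole complex vanishes because, for each fixed right-hand vertex $i$, the relevant summands have the same characters as the terms of the exact complex of Proposition \ref{prp:resolution} tensored with $e_iA$; since all other homology vanishes, zero total character kills the last homology group. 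Your approach instead specializes the bimodule complex by $-\ts_A S_i$, identifies the result with Proposition \ref{prp:resolution} (this identification is indeed correct: arrows act by zero on $S_i$, so the ``inner'' terms of $d_1,d_2,d_3$ die and you recover the one-sided maps $\cdot a$, $\cdot\frac{\dd W}{\dd b}a\inv$, $\cdot b$), and then lifts exactness using flatness of the terms as right $A$-modules together with a graded Nakayama argument. That lifting is the one place you must be precise: you need either a minimal-complex decomposition or a Tor spectral sequence $\Tor_p(H_q(C),S_i)\Rightarrow H_{p+q}(C\ts_A S_i)$ applied to the lowest nonvanishing homology, plus the fact that the homology is $\La$-graded with weights bounded below along a linear functional positive on the arrow cone; the strong convexity of that cone (non-degeneracy) and Condition \ref{cond:B} (which bounds the graded pieces of $A$) supply exactly this, as you anticipated. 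The trade-off: your argument treats all positions uniformly (the injectivity of $j$ comes for free, since the specialized complexes are exact at the top as well) and is the more structural, reusable argument; the paper's character count is shorter and avoids any Nakayama-type machinery, at the price of an ad hoc hand check of injectivity.
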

\begin{proof}
We have to show the exactness of the sequence \cite[Prop. 5.1.9, Cor. 5.3.3]{Ginz1}
$$0\arr\bigoplus_i e_iA\ts Ae_i\arr^j\bigoplus_{b:k\arr i}Ae_k\ts e_iA\arr
\bigoplus_{a:i\arr j}Ae_j\ts e_iA\arr\bigoplus_i Ae_i\ts e_iA\arr A\arr 0.$$
The exactness should be actually shown just in the first and the second terms.

Let us show that $j$ is injective. The  map $j$ is given  by \cite{Ginz1}
$$x\ts y\mto \sum_{b:k\arr i}yb\ts x-y\ts bx.$$
A general element in $\bigoplus_i e_iA\ts Ae_i$ can be written
in the form 
$$\sum_{i\in Q_0}\sum_{u\in \De_i}x_u\ts u,$$ 
where $x_u\in e_i A$ for $u\in \De_i$. If this element maps to zero
then for any arrow $b:k\arr i$, we have
$$\sum_{u\in\De_i}ub\ts x_{u}=\sum_{u\in\De_k}u\ts bx_u,$$
and therefore $x_u=bx_{ub}$ for any $u\in\De_i$.
This easily implies that all $x_u$ are zero.

To prove the exactness in the second term, we just have to show that the
character of the complex of \La-graded modules is zero. The character is defined
as follows. For any \La-graded
vector space $V$ that is finite-dimensional in any degree, 
we define $\ch V=\sum_{\la\in\La}\dim V_\la t^\la$. 
For any finite complex $V_\cdot$ of \La-graded vector spaces, 
we define $\ch(V_\cdot)=\sum_{k\in\cZ}(-1)^k\ch(V_k)$).
For a fixed $i\in Q_0$ the corresponding summand of every component
of the complex is obtained by tensoring the component of the previous
proposition with $e_iA$. As the complex in the proposition is exact,
the character of the i-th component is zero.
\end{proof}

\begin{rmr}
For any finite-dimensional modules $M,N$ over $\cC Q/(\dd W)$,
we have
$$\Ext^k(M,N)\iso\Ext^{3-k}(N,M)\dual\qquad k=0,1,2,3.$$
If $M$ and $N$ are \La-graded modules, then we have
an isomorphism of \La-graded vector spaces
$$\Ext^k(M,N)\iso \Ext^{3-k}(N,M[-\ub\om])\dual\qquad k=0,1,2,3.$$
\end{rmr}
\section{Donaldson-Thomas invariants}\label{sec:dt invariants}
Let $(Q,W)$ be a quiver potential arising from a brain tiling
satisfying all consistency conditions and let $A=\cC Q/(\dd W)$. 
Let $i_0\in\q_0$ and $\al\in\cN^{Q_0}$. 
We put $X=\Hilb^\al_{i_0}(A)$. It is known that $X$ has
an (equivariant) symmetric obstruction theory 
\cite[Theorem 1.3.1]{Szen1}.
According to Behrend and Fantechi \cite[Theorem 3.4]{BehrFant1},
the DT type invariant
of $X$ can be computed by the formula
$$\#^{vir}X=\sum_{(M,m)\in {X^T}}(-1)^{\dim T_{(M,m)}X},$$
where $T_{(M,m)}X$ is the tangent space at the $T$-fixed 
point $(M,m)$.
For any finite ideal $\Om\sb \De_{i_0}$, let $(M,m)$ be the
corresponding $T$-fixed $i_0$-cyclic $A$-module
and let $d_\Om$ be the dimension of the tangent space at this
point. Then we have
$$Z_{DT}^{i_0}(A)=\sum_{\Om\sb \De_{i_0}}(-1)^{d_\Om}x^{\ub\Om},$$
where $\ub\Om=(\#\Om_i)_{i\in\q_0}\in\cN^{\q_0}$
and $\Om_i$ is the set of paths in \Om with endpoint $i\in Q_0$.

\begin{thr}
For any finite ideal \Om, we have
$$d_{\Om}\equiv \ub\Om_{i_0}+\ang{\ub\Om,\ub\Om}\pmod 2,$$
where $\ang{-,-}$ is the Ringel form of the quiver $Q$,
given, for $\al,\be\in\cZ^{\q_0}$, by 
$$\ang{\al,\be}=\sum_{i\in\q_0}\al_i\be_i-\sum_{a:i\arr j}\al_i\be_j.$$
\end{thr}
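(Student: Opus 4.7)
The plan is to express $d_\Om$ via a standard Quot-scheme tangent computation, reduce the parity assertion to a statement about the middle rank of the $3$-Calabi-Yau Ext complex, and then exploit the $\La$-grading inherited from $T$-fixedness to settle that parity.

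First, viewing $\Hilb^\al_{i_0}(A)$ as parametrizing surjections $P_{i_0}\twoheadrightarrow M$, its tangent at $(M,m)$ is the standard Quot tangent $\Hom_A(K,M)$ with $K=\ker(P_{i_0}\twoheadrightarrow M)$. Applying $\Hom_A(-,M)$ to $0\to K\to P_{i_0}\to M\to 0$ and using projectivity of $P_{i_0}$ (so $\Ext^1_A(P_{i_0},M)=0$) yields
\[
d_\Om \;=\; \al_{i_0} \;-\; \dim\End_A(M) \;+\; \dim\Ext^1_A(M,M),
\]
and it therefore suffices to show $\dim\End_A(M) + \dim\Ext^1_A(M,M) \equiv \ang{\al,\al} \pmod 2$.

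Next, since $A$ is $3$-Calabi-Yau by the preceding section, the Ginzburg bimodule resolution computes $\Ext^*_A(M,M)$ as the cohomology of a length-$3$ complex
\[
E^0 \xarr{d_0} E^1 \xarr{d_1} E^2 \xarr{d_2} E^3,
\]
with $E^0 = E^3 = \bigoplus_i\End(M_i)$ of dimension $\sum_i \al_i^2$ and $E^1 = E^2 = \bigoplus_{a\in Q_1}\Hom(M_{s(a)},M_{t(a)})$ of dimension $\sum_a \al_{s(a)}\al_{t(a)}$. Since $\al_i^2\equiv \al_i$ and $-1\equiv 1\pmod 2$, we obtain $\dim E^0 + \dim E^1 \equiv \ang{\al,\al}\pmod 2$, and a straightforward rank-nullity computation gives
\[
\dim\End_A(M) + \dim\Ext^1_A(M,M) \;\equiv\; \dim E^0 + \dim E^1 + \dim\im(d_1) \;\equiv\; \ang{\al,\al} + \dim\im(d_1) \pmod 2.
\]
The theorem thus reduces to the parity statement $\dim\im(d_1)\equiv 0\pmod 2$.

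This last parity is where I expect the main difficulty to lie. Under the $3$-Calabi-Yau trace pairing identifying $E^2\cong(E^1)^\vee$, the differential $d_1$ is the Hessian of $W$ at the representation $M$, and so corresponds to a \emph{symmetric} bilinear form on $E^1$; over $\cC$ a symmetric form can have any rank, so evenness must come from additional structure. Since $(M,m)$ is $T$-fixed, $M$ and hence $E^1$ carry a $\La$-grading, and the $\La$-homogeneity of $W$ at weight $\ub\om$ forces the Hessian to pair $E^1_\la$ with $E^1_{\ub\om-\la}$. Contributions from pairs with $\la\neq \ub\om-\la$ are automatically even, and the self-paired piece at $\la = \ub\om/2$ (when nonempty) inherits a compatible pairing from the graded Serre duality $\Ext^k(M,N)\iso\Ext^{3-k}(N,M[-\ub\om])^\vee$ of the remark after the $3$-Calabi-Yau theorem, which I expect to endow it with a symplectic structure and thereby force even rank. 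Combining these cases would give $\dim\im(d_1)\equiv 0\pmod 2$, and the theorem follows.
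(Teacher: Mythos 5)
Your first two steps are fine and run parallel to the paper: the Quot-type identification of the tangent space (the paper even notes that the restriction map $\Hom(P_{i_0},M)\arr\Hom(I,M)$ vanishes for degree reasons, giving $d_\Om=\dim\Ext^1(M,M)$ directly, but your alternating-sum version is equivalent mod $2$), and the rank--nullity reduction to the claim that $\rank(d_1)$ is even, with $d_1$ the Hessian of $\tr W$ identified with a bilinear form on $E^1$ via the trace pairing. The genuine gap is in the last step. The self-paired graded piece (weights $\la$ with $2\la=\pm\ub\om$, depending on sign conventions) cannot be handled the way you propose: the form you are restricting there is the Hessian, which is \emph{symmetric}, and the graded Calabi--Yau duality between $\Ext^1$ and $\Ext^2$ is induced by that same symmetric structure, so it cannot ``endow it with a symplectic structure''; over $\cC$ a symmetric bilinear form on that piece could perfectly well have odd rank. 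As written, the parity of $\rank(d_1)$ is therefore not established.

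What actually closes the argument — and what the paper proves at exactly this point — is that the self-paired case never occurs: there is no $\la\in\La$ with $2\la=\pm\ub\om$. Indeed, if $\ub\om=-2\la$, then the class of $\la$ in $\coker d_2\iso\La/\cZ\ub\om$ would be a nonzero torsion element, contradicting the freeness statement of Lemma \ref{lmm:free group} (which uses the non-degeneracy of the brane tiling). Once this is in place, every graded block of the Hessian pairs two distinct weight spaces, so its rank is even and your computation finishes the proof. The paper packages the same point in the Grothendieck group of \La-graded vector spaces: it shows $A-B=D(A-B)$ for the involution $D(t^\la)=t^{-\la-\ub\om}$ and deduces evenness precisely from the fact that $D$ fixes no weight, i.e. $\la\ne-\la-\ub\om$ for all $\la\in\La$. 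So your route is essentially a linear-algebra reformulation of the paper's argument, but it is incomplete without this torsion-freeness input, and the symplectic mechanism you propose in its place does not work.
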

\begin{proof}
Let $(M,m)$ be an $i_0$-cyclic $A$-module corresponding to
\Om. Consider an exact sequence 
$$0\arr I\arr P_{i_0}\arr M\arr 0.$$
Note that all the modules in this sequence are \La-graded.
One can show, in the same way as for $\Quot$-schemes 
(see e.g.\ \cite[Prop.\ 2.2.7]{HL1}) that
the tangent space $T_{(M,m)}X$ is isomorphic
to $\Hom(I,M)$. Consider the exact sequence
$$0\arr\Hom(M,M)\arr\Hom(P_{i_0},M)\arr\Hom(I,M)\arr\Ext^1(M,M)\arr0.$$
The second map is actually zero, because any
morphism $f:P_{i_0}\arr M$ maps $I$ to zero by degree reasons.
It follows that 
$$d_\Om=\dim\Hom(I,M)=\dim\Ext^1(M,M).$$

In order to find $\dim\Ext^1(M,M)$, we will study
the Euler characteristic $\hi(M,M)$. Usually it is
defined as an alternating sum of dimensions of
$\Ext$-groups, but this gives a rather unsatisfactory
output in the case of Calabi-Yau algebras. Therefore,
for any finite dimensional \La-graded $A$-modules $L,N$, we define
$$\hi(L,N)=\sum_{k\ge0}(-1)^k\Ext^k(L,N),$$
considered as an element of the Grothendieck group
$K_\La$ of \La-graded vector spaces. 
It follows from the results of the previous section that
for any such modules $L,N$ we have
$$\Ext^k(L,N)=\Ext^{3-k}(N,L[-\ub\om])\dual=\Ext^{3-k}(N,L)\dual[\ub\om]$$
in $K_\La$.
For any $\la\in\La$, we define $t^\la\in K_\La$ to be the 
element corresponding to the one-dimensional
vector space concentrated in degree \la. 
We define a homomorphism $D:K_\La\arr K_\La$ by
the formula $t^\la\mto t^{-\la-\ub\om}$.
Then 
$$\Ext^k(L,N)=D\Ext^{3-k}(N,L).$$
It follows
from Proposition \ref{prp:resolution} that for any $i,j\in\q_0$ we have
$$\hi(S_i,S_j)=\de_{i,j}-\sum_{a:i\arr j}t^{-a}+
\sum_{b:j\arr i}t^{b-\ub\om}-\de_{i,j}t^{-\om}.$$
This implies for the module $M$:
\begin{align*}
\hi(M,M)
=\sum_{\stackrel{u,v\in\Om}{t(u)=t(v)}}t^{\wt(v)-\wt(u)}
-\sum_{\stackrel{u,v\in\Om}{a:t(u)\arr t(v)}}t^{\wt(v)-\wt(u)-a}\\
+\sum_{\stackrel{u,v\in\Om}{b:t(u)\arr t(v)}}t^{\wt(v)-\wt(u)+b-\ub\om}
-\sum_{\stackrel{u,v\in\Om}{t(u)=t(v)}}t^{\wt(v)-\wt(u)-\ub\om}.
\end{align*}
Therefore, for 
$$A:=\sum_{\stackrel{u,v\in\Om}{t(u)=t(v)}}t^{\wt(v)-\wt(u)}
-\sum_{\stackrel{u,v\in\Om}{a:t(u)\arr t(v)}}t^{\wt(v)-\wt(u)-a}$$
we have $\hi(M,M)=A-DA$.
Let us define 
$$B:=\Hom(M,M)-\Ext^1(M,M).$$ 
Then we have $\hi(M,M)=B-DB$. Therefore
$$A-B=D(A-B).$$
If $t^\la$ has coefficient $n$ in $A-B$
then also $t^{-\la-\ub\om}$ has coefficient $n$ in $A-B$.
Note that $\la\ne {-\la-\ub\om}$ for any $\la\in\La$.
Indeed, if $\ub\om=-2\la$ then it follows from the exact sequence
(see Section \ref{sec:brane})
$$0\arr \cZ\arr^{1\mto\ub\om}\La\arr\coker d_2\arr 0$$
that $\coker d_2$ has torsion, contradicting Lemma \ref{lmm:free group}.
This implies that the dimension of $A-B$ is even.
Now we compute
$$\dim A=\sum_{i\in\q_0}\ub\Om_i^2-\sum_{a:i\arr j}\ub\Om_i\ub\Om_j=\ang{\ub\Om,\ub\Om},$$
$$\dim\Hom(M,M)=\dim\Hom(P_{i_0},M)=\ub\Om_{i_0},$$
and
\enlargethispage{\baselineskip}
$$\dim\Ext^1(M,M)\equiv \dim\Hom(M,M)-\dim A
\equiv\ub\Om_{i_0}+\ang{\ub\Om,\ub\Om}\pmod 2.$$
\end{proof}

\begin{rmr}
Our result immediately implies \cite[Theorem A.3]{Young1} on the signs
in the cases $\cC^3/\cZ_n$ and $\cC^3/(\cZ_2\xx\cZ_2)$
and \cite[Theorem 2.7.1]{Szen1} on the signs in the case of conifold.
\end{rmr}

\begin{crl}
Let $Z^{i_0}(A)=\sum_{\al\in\cN^I}Z^{i_0}(A)_\al x^\al$. Then
$$Z_{DT}^{i_0}(A)=\sum_{\al\in\cN^I}(-1)^{\al_{i_0}+\ang{\al,\al}} Z^{i_0}(A)_\al x^\al.$$
\end{crl}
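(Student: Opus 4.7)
The plan is to derive the corollary as a direct consequence of the preceding theorem together with Proposition \ref{prp:formula1}, by simply regrouping the sum over finite ideals according to the dimension vector.

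First I would recall the definition
$$Z_{DT}^{i_0}(A)=\sum_{\Om\sb \De_{i_0}}(-1)^{d_\Om}x^{\ub\Om}$$
from the beginning of Section \ref{sec:dt invariants}, and apply the theorem just proved to replace the exponent $d_\Om$ by $\ub\Om_{i_0}+\ang{\ub\Om,\ub\Om}$ modulo $2$. Since the sign depends only on $\al:=\ub\Om$, the sign factor is constant on the set of finite ideals with a fixed dimension vector, so I can pull it out of the inner sum.

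Next I would use Proposition \ref{prp:formula1}, which gives
$$Z^{i_0}(A)=\sum_{\Om}x^{\ub\Om}=\sum_{\al\in\cN^{\q_0}}\#\sets{\Om\sb\De_{i_0}}{\ub\Om=\al}\,x^\al,$$
so that $Z^{i_0}(A)_\al$ counts the finite ideals in $\De_{i_0}$ of dimension vector $\al$. Substituting this into the regrouped expression yields the claimed formula. The only minor point to note is that $\al_{i_0}=\ub\Om_{i_0}$ (the number of paths in $\Om$ ending at $i_0$) is precisely the $i_0$-coordinate of the dimension vector of the associated module, so the two exponents match.

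There is no real obstacle here; the content is entirely in the preceding theorem, and the corollary is just a bookkeeping reformulation grouping ideals by their dimension vector.
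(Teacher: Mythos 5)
Your argument is correct and is exactly the route the paper intends: the corollary is the immediate bookkeeping consequence of the parity theorem for $d_\Om$, combined with the expression $Z_{DT}^{i_0}(A)=\sum_{\Om}(-1)^{d_\Om}x^{\ub\Om}$ and Proposition \ref{prp:formula1}, grouping finite ideals by their dimension vector $\al=\ub\Om$. Nothing is missing.
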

\section{Rationality conjecture}\label{sec:conj}
In the last section we are going to discuss
the structure of the generating function 
$Z^i(A)\in\cQ\pser{x_j|j\in Q_0}$, where 
$A=\cC Q/(\dd W)$ and $i\in Q_0$.

Let $R=\cQ\pser{x_1,\dots,x_r}$ and $R^+$ be its maximal ideal.
We endow $R$ with the structure of a \la-ring (see \cite{Getz1}
or \cite[Appendix]{M2})
by defining the Adams operations $\psi_n:R\arr R$, $n\ge1$
$$\psi_n(f(x_1,\dots,x_r)):=f(x_1^n,\dots,x_r^n).$$
We define a plethystic exponent $\Exp:R^+\arr 1+R^+$ by the formula
$$\Exp(f)=\exp\bigg(\sum_{n\ge1}\frac1n \psi_n(f)\bigg).$$
Its inverse, plethystic logarithm $\Log:1+R^+\arr R^+$, is given by
$$\Log(f)=\sum_{n\ge1}\frac{\mu(n)}n\psi_n(\log(f)).$$

For example, the MacMahon function
$$M(x,z)=\prod_{n\ge1}\left(\frac1{1-xz^n}\right)^n
=\prod_{n\ge1}\Exp(xz^n)^n=\Exp\bigg(\sum_{n\ge1}nxz^n\bigg)=
\Exp\left(\frac{xz}{(1-z)^2}\right).$$

In \cite{Young1} it was proved that the generating function 
$Z^i(A)$ in the cases of the orbifolds $\cC^3/\cZ_n$ (with a group action
$\frac1n(1,0,-1)$) and $\cC^3/(\cZ_2\xx\cZ_2)$
(with a group action $\frac12(1,0,1)\xx\frac12(0,1,1)$)
and in the case of the conifold can be represented as a product
of MacMahon functions. This allows us to formulate

\begin{conj}
Let $G$ be a consistent brane tiling, $(Q,W)$ be the corresponding
quiver potential, $A=\cC Q/(\dd W)$ be the quiver potential algebra
and $i\in Q_0$ be some vertex. Let $r=\# Q_0$. Then the power series
$$\left.\Log(Z^i(A))\right|_{x_1=\dots=x_r=x}\in\cQ\pser x$$
is a rational function.
\end{conj}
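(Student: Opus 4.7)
The starting point is the perfect matching formula from the end of Section \ref{sec:perf mat}, which under the specialization $x_1=\dots=x_r=x$ reads
$$\left.Z^{i_0}(A)\right|_{x_j=x}=\sum_{I}x^{h(I)},\qquad h(I):=\sum_{i\in\pq_0}h_I(i),$$
with $I$ ranging over perfect matchings of the periodic bipartite graph congruent to the canonical matching $I_0$. Since $I\triangle I_0$ is a finite disjoint union of traps and $h_I(i)$ records in how many nested traps the vertex $i$ lies, $h(I)$ is simply the total number of (vertex, trap) incidences, and in particular is a finite integer. The conjecture thus reduces to showing that this loop-gas generating function on the periodic tiling is the plethystic exponential of a rational function of $x$.

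I would then stratify the sum by the asymptotic behavior of $I$ on the torus quotient $T$. Each perfect matching congruent to $I_0$ differs from $I_0$ by its collection of trap loops, which have a well-defined total class in $H_1(T,\cZ)$; after fixing this class one is left with enumerating finite nested loop configurations around $i_0$. The plan is to prove that, within each stratum, the resulting series in $x$ is the plethystic exponential of a rational function by invoking a Nagao--Nakajima-type wall-crossing correspondence between the NCDT moduli problem and the commutative DT invariants of the toric Calabi--Yau threefold $X$ associated to the brane tiling. On the commutative side one has the MNOP product over toric vertices, while the factors separating the NCDT chamber from the commutative DT chamber are universally of plethystic-exponential form by Kontsevich--Soibelman / Joyce--Song. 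Telescoping these factors and specializing $x_j\mto x$ should produce an identity $Z^{i_0}(A)|_{x_j=x}=\Exp(f(x))$ with $f(x)\in\cQ(x)$, which is equivalent to the rationality of the plethystic log.

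The main obstacle is extending the Nagao--Nakajima framework from the conifold case to an arbitrary consistent brane tiling: one needs (i) a family of stability conditions on the derived category of $A$-modules connecting the NCDT chamber to a commutative DT chamber, (ii) control of all intermediate walls, and (iii) an explicit identification of the wall-crossing factor under the specialization $x_j\mto x$. The derived equivalences with the toric geometry are available in the consistent case after Broomhead, but the bookkeeping of walls and the collapse of the telescoping product to a single rational function seem genuinely hard. As a complementary, more combinatorial route I would attempt to generalize Kenyon's Kasteleyn-determinant argument for the $\cC^3$ dimer model to the framed infinite-plane setting for arbitrary periodic bipartite graphs; this would bypass wall-crossing entirely, but still requires a Cauchy- or Schur-type identity producing the MacMahon-like factors uniformly in the brane tiling, something which in full generality is not currently available.
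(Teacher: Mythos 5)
The statement you are addressing is a \emph{conjecture} in the paper: the authors give no proof, only supporting evidence (Young's closed formulas for $\cC^3/\cZ_n$, $\cC^3/(\cZ_2\xx\cZ_2)$ and the conifold, plus computer calculations for the suspended pinch point and $dP3$), and they themselves point to the Nagao--Nakajima wall-crossing approach as a possible route. Your proposal is essentially that same suggested route, but it is a research program rather than a proof, and you say so yourself: steps (i)--(iii) of your plan --- constructing a chamber structure of stability conditions joining the NCDT chamber to the large-volume/DT chamber for an arbitrary consistent brane tiling, controlling all intermediate walls, and identifying each wall-crossing factor --- are precisely the content that is missing, and none of it is supplied or reduced to something available. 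Asserting that the wall-crossing factors ``are universally of plethystic-exponential form by Kontsevich--Soibelman / Joyce--Song'' does not close the gap: even granting a factorization $Z^{i_0}(A)=\prod_k\Exp(f_k)$ over walls, you would still need the (possibly infinite) collection of exponents $f_k$ to sum, after the specialization $x_j\mto x$, to a \emph{rational} function, and you give no mechanism for that; the same issue arises on the commutative side, where rationality of the reduced DT series was itself conjectural. Note also that the paper's own remark on $\cC^3/\cZ_3$ (due to Bridgeland) shows the unspecialized $\Log(Z^i(A))$ need not be rational, so any successful argument must use the specialization in an essential way, which your sketch does not do.

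Two further points in your outline are shaky as stated. First, the stratification of matchings congruent to $I_0$ by a class in $H_1(T,\cZ)$ does not obviously make sense: the traps are finite contractible loops in the plane (they enclose $i_0$), so the natural invariants are nesting data around $i_0$, not homology classes on the torus; and even with a sensible stratification, $\Log$ does not interact termwise with a decomposition of $Z$ into strata, so proving each stratum is a plethystic exponential of a rational function would not by itself yield the conjecture. Second, the alternative Kasteleyn/Kenyon route again requires a uniform Schur- or Cauchy-type identity producing MacMahon-like factors for an arbitrary periodic bipartite graph with the prescribed boundary behavior, which you correctly acknowledge is not available. In short, the proposal identifies the plausible strategy (the one the authors themselves anticipate) but contains no step that actually establishes rationality, so it cannot be accepted as a proof of the conjecture.
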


\begin{rmr}
It is not true in general that the function $\Log(Z^i(A))$
is a rational function. This can be seen already in the case
of the orbifold $\cC^3/\cZ_3$ with the group action $\frac13(1,1,1)$.
We thank Tom Bridgeland for this remark. 
\end{rmr}

We have tested this conjecture using a computer in the case of a
suspended pinch point (see e.g. \cite{Kenn1}), which is given by the quiver
\vspace{10pt}
$$\xymatrix{
&1\ar@{<->}[dr]\ar@{<->}[dl]\ar@(ul,ur)[]&\\
3\ar@{<->}[rr]&&2}$$
with potential
$$W=x_{21}x_{12}x_{23}x_{32}-x_{32}x_{23}x_{31}x_{13}+
x_{13}x_{31}x_{11}-x_{12}x_{21}x_{11},$$
where $x_{ij}$ is an arrow between the points $i$ and $j$.
Then
\begin{multline*}
\left.\Log(Z^1(A))\right|_{x_1=x_2=x_3=x}\\
=\frac{x^{11}+2x^{10}+3x^9+2x^8+5x^7+6x^6+5x^5+2x^4+3x^3+2x^2+x}{(1-x^6)^2},
\end{multline*}
\begin{multline*}
\left.\Log(Z^2(A))\right|_{x_1=x_2=x_3=x}\\
=\frac{x^{11}+x^{10}+3x^9+3x^8+5x^7+6x^6+5x^5+3x^4+3x^3+x^2+x}{(1-x^6)^2}.
\end{multline*}

We have also tested Model I of $dP3$ (see e.g.\ \cite{Kenn1}), 
which is given by the quiver
$$\xymatrix{
&1\ar[dr]\ar[ddr]&\\
6\ar[ur]\ar[rr]&&2\ar[d]\ar[ldd]\\
5\ar[uur]\ar[u]&&3\ar[ld]\ar[ll]\\
&4\ar[lu]\ar[luu]}$$
with potential
\begin{align*}
W=x_{12}x_{23}x_{34}x_{45}x_{56}x_{61}
+x_{13}x_{35}x_{51}+x_{24}x_{46}x_{62}\\
-x_{23}x_{35}x_{56}x_{62}
-x_{13}x_{34}x_{46}x_{61}-x_{12}x_{24}x_{45}x_{51}.
\end{align*}
In this case we have
\begin{multline*}
\left.\Log(Z^1(A))\right|_{x_1=\dots=x_6=x}\\
=\frac{x^{11}+x^{10}+2x^9+2x^8+5x^7+6x^6+5x^5+2x^4+2x^3+x^2+x}{(1-x^6)^2}.
\end{multline*}

Of course it would be nice to be able to write down the rational function
predicted by the conjecture just from the brane tiling data. At the 
moment we do not have such a construction.

\bibliography{../tex/fullbib}
\bibliographystyle{../tex/hamsplain}
\end{document}